\newtheorem{thm}{Theorem}[section]
\def\C{\mathbb C}
\def\dim{\operatorname{dim}}
\def\id{\operatorname{id}}
\newtheorem{cor}[thm]{Corollary}
\newtheorem{teo}[thm]{Theorem}
\newtheorem{lem}[thm]{Lemma}
\newtheorem{prop}[thm]{Proposition}
\theoremstyle{definition}
\newtheorem{ex}[thm]{Example}
\newtheorem{defi}[thm]{Definition}
\newtheorem{remark}[thm]{Remark}
\def\C{\mathbb C}
\def\dim{\operatorname{dim}}
\def\id{\operatorname{id}}
\def\Derlog{\operatorname{Derlog}}
\begin{document}
\title{Equisingularity of map germs from a surface to the plane}

\author{J. J. Nuño-Ballesteros, B. Oréfice-Okamoto, J. N. Tomazella}

\address{Departament de Geometria i Topologia,
Universitat de Val\`encia, Campus de Burjassot, 46100 Burjassot
SPAIN}

\email{Juan.Nuno@uv.es}

\address{Departamento de Matem\'atica, Universidade Federal de S\~ao Carlos, Caixa Postal 676,
13560-905, S\~ao Carlos, SP, BRAZIL}

\email{bruna@dm.ufscar.br}

\address{Departamento de Matem\'atica, Universidade Federal de S\~ao Carlos, Caixa Postal 676,
13560-905, S\~ao Carlos, SP, BRAZIL}

\email{tomazella@dm.ufscar.br}

\thanks{The first author is partially supported by DGICYT Grant MTM2012--33073 and CAPES-PVE Grant 88881.062217/2014-01. The second author is partially supported by FAPESP Grant 2013/14014-3. The third author is partially supported by CNPq Grant 309626/2014-5 and
FAPESP Grant 2013/10856-0.}

\subjclass[2000]{Primary 32S15; Secondary 58K60, 58K40} 
\keywords{Isolated complete intersection singularity, Whitney equisingularity, finite determinacy}

\begin{abstract}
Let $(X,0)$ be an ICIS of dimension 2 and let $f:(X,0)\to (\C^2,0)$ be a map germ with an isolated instability. We look at the invariants that appear when $X_s$ is a smoothing of $(X,0)$ and $f_s:X_s\to B_\epsilon$ is a stabilization of $f$. We find relations between these invariants and also give necessary and sufficient conditions for a $1$-parameter family to be Whitney equisingular. As an application, we show that a family $(X_t,0)$ is Zariski equisingular if and only if it is Whitney equisingular and the numbers of cusps and double folds of a generic linear projection are constant with respect to $t$. 
\end{abstract}

\maketitle

\section{Introduction}

Let $(X,0)\subset (\C^n,0)$ be an isolated complete intersection singularity (ICIS) of dimension 2 and let $f:(X,0)\to (\C^2,0)$ be a map germ with an isolated instability. Consider $X_s$ a smoothing of $(X,0)$ and $f_s:X_s\to B_\epsilon$ a stabilization of $f$, with $s\in D$,  where $D$ is an open neighbourhood of $0$ in $\C$ and $B_\epsilon$ is a ball of radius $\epsilon$ in $\C^2$. This means that for $s=0$, $X_0=X$ and $f_0=f$ and for $s\ne0$, $X_s$ is smooth and $f_s$ is (locally) stable. From the celebrated paper of Whitney \cite{W}, we know that the only singularities of $f_s$ are either folds, cusps or double folds. The numbers of cusps and double folds in $f_s$, for $s$ small enough, do not depend on the stabilization and we denote them by $c$ and $d$, respectively. We also denote by $S$ the singular locus of $f$ and by $\Delta:=f(S)$ the discriminant.

In the case when $X:=\C^2$, Rieger \cite{Rieger} and Gaffney and Mond \cite{GM} showed that
$$\mu(\Delta)=\mu(S)+2c+2d,\quad
c=\mu(S)+m-2,
$$
where $\mu$ denotes the Milnor number and $m$ is the degree of $f$. One first consequence is that $c,d$ are topological invariants of $f$. Another important consequence is that they control the Whitney equisingularity of a $1$-parameter family $f_t:(\C^2,0)\to(\C^2,0)$. In fact, we have the following equivalent statements (see \cite{G,GM}):
\begin{enumerate}
\item $f_t$ is Whitney equisingular,
\item $f_t$ is topologically trivial,
\item $\mu(\Delta_t)$ is constant on $t$,
\item $c_t,d_t$ are both constant on $t$.
\end{enumerate}

The main goal in this paper is to extend all the above results for the case when $(X,0)$ is a 2-dimensional ICIS. On one hand, we generalize both formulas. The first one is exactly the same but in the second one, we have to add $\mu(X,0)$ due to the contribution of the singular point in $(X,0)$. Moreover, we give a characterization of the Whitney equisingularity of a $1$-parameter family $f_t:(X_t,0)\to(\C^2,0)$ in terms of the constancy of the numerical invariants.

We also consider the particular case that the map $f$ is a generic linear projection $p:(X,0)\to(\C^2,0)$. Then, the numbers $c,d$ do not depend on the projection $p$ and turn out to be invariants of $(X,0)$. As an application of the formulas, we deduce that a 1-parameter family $(X_t,0)$ is Zariski equisingular (for a generic linear projection) if and only if it is Whitney equisingular and $c_t,d_t$ are constant on $t$. The results of this section are related to those of Henry and Merle \cite{HM}, although their viewpoint is slightly different, since they consider the varieties of cusps and double folds of the total space of the deformation $(\mathcal X,0)$. Other results about classification of singularities of projections of complete intersections can be found in \cite{Go}.

In the last part of the paper, we assume that $(X,0)$ and $f$ are both weighted homogeneous (with the same weights) and  we give explicit formulas for all the invariants in terms of the weights and degrees. Our formulas extend the results of Gaffney and Mond in the smooth case \cite{GM2}.

\section{Maps on ICIS}

We recall in this section the basic definitions and results for maps on ICIS, following Mond and Montaldi \cite{MM}.
Let $(X,0)\subset(\C^n,0)$ be an ICIS and let $f:(X,0)\to(\C^p,0)$ be an analytic map germ. The singular locus of $f$, $S$, is equal to the union of the singular set of $(X,0)$ and the set of the points where $X$ is smooth and $f$ is not a submersion. We say that $f$ has \emph{finite singularity type} if the restriction $f|_{S}$ is finite.

Let $f,g:(X,0)\to(\C^p,0)$ be analytic map germs. We say that $f$ and $g$ are \emph{$\mathscr A$-equivalent} if there exist biholomorphism germs $\omega:(X,0)\to(X,0)$ and $\xi:(\C^p,0)\to(\C^p,0)$ such that the following diagram is commutative:
$$
\xymatrix{(X,0)\ar[r]^-f\ar[d]^\omega&(\C^p,0)\ar[d]^-\xi\\
(X,0)\ar[r]^-g&(\C^p,0)}
$$

An \emph{unfolding} of $f:(X,0)\to(\C^p,0)$ is a germ $F:(\mathcal X,0)\to(\C^r\times\C^p,0)$ together with $\pi:(\mathcal X,0)\to(\C^r,0)$, a flat deformation of $(X,0)$, such that the diagram
$$
\xymatrix{(\mathcal X,0)\ar[d]^-{\pi}\ar[r]^-{F}&(\C^r\times\C^p,0)\ar[ld]^-{p_1}\\
(\C^r,0)&}
$$
commutes and $p_2\circ F|_X=f$, where $p_1$ and $p_2$ are the projection to the first and to the second coordinate, respectively.

For simplicity, we assume that $(\mathcal X,0)\subset(\C^r\times\C^n,0)$ and that $\pi(u,x)=u$. Then $F(u,x)=(u,f_u(x))$ and $f_0=f$. For each $u\in\C^r$, we write $X_u=\pi^{-1}(u)$ and $f_u=(F|_{X_u}):X_u\to\C^p$.

We say that two unfoldings $F:(\mathcal X,0)\to(\C^r\times\C^p,0)$ and $G:(\tilde{\mathcal X},0)\to(\C^r\times\C^p,0)$ are \emph{$\mathscr A$-equivalent} (as unfoldings) if there exist biholomorphism germs $\Phi:(\mathcal X,0)\to(\tilde{\mathcal X},0)$ and $\Psi:(\C^r\times\C^p,0)\to(\C^r\times\C^p,0)$ such that
$$
\xymatrix{(\mathcal X,0)\ar[r]^-F\ar[d]^-\Phi&(\C^r\times\C^p,0)\ar[d]^\Psi\\
(\tilde{\mathcal X},0)\ar[r]^-G&(\C^r\times\C^p,0)}
$$
commutes and moreover $\Phi$ and $\Psi$ are unfoldings of their respective identity maps.

We say that $F$ is \emph{$\mathscr A$-trivial} if it is $\mathscr A$-equivalent to the constant unfolding $$\xymatrix{(\C^r\times X,0)\ar[r]^-{\mbox{id}\times f}&(\C^r\times\C^p,0)}.$$ 
The germ $f:(X,0)\to(\C^p,0)$ is called \emph{stable} if any unfolding of it is $\mathscr A$-trivial.

Given $F:(\mathcal X,0)\to (\C^r\times\C^p,0)$ an unfolding of $f$ and $h:(\C^s,0)\to(\C^r,0)$ an analytic map germ. The pull-back of $F$ by $h$ is the unfolding $h^*F$ defined by the diagram
$$
\xymatrix{(\C^s\times_{\C^r}\mathcal X,0)\ar[d]^-{\pi'}\ar[r]^-{h^*F}&(\C^s\times\C^p,0)\ar[ld]^-{p_1}\\
(\C^s,0)&}
$$
where $\C^s\times_{\C^r}\mathcal X$ is the fibered product, that is,
$$\C^s\times_{\C^r}\mathcal X=\{(v,(u,x))\in\C^s\times \mathcal X: h(v)=u\},
$$ 
and $\pi'(v,(u,x))=v$, $h^*F(v,(u,x))=(v,f_{h(v)}(x))$.

 We say that $F$ is a \emph{versal} unfolding if any unfolding of $f$ is $\mathscr A$-equivalent to $h^*F$ for some map germ $h$.

\medskip
Like in the case of maps $f:(\C^n,0)\to (\C^p,0)$, there exist infinitesimal criteria to decide if an unfolding of a map $f:(X,0)\to(\C^p,0)$ is a versal unfolding (see \cite{MM}). We denote by $\Theta_n$ the $\mathscr O_n$-module of the vector fields in $(\C^n,0)$ (where $\mathscr O_n$ is the ring of the analytic function germs from $(\C^n,0)$ to $\C$), and by $\Derlog(X,0)$ the $\mathscr{O}_n$-submodule of $\Theta_n$ consisting of those vector fields tangent to $X$. Moreover, we denote by $\Theta_{X,0}$  the $\mathscr{O}_{X,0}$-module of vector fields on $(X,0)$ (where $\mathscr O_{X,0}$ is the ring of the analytic function germs from $(X,0)$ to $\C$). We have:
$$\Theta_{X,0}\cong \frac{\Derlog(X,0)}{I(X,0)\Theta_n}, $$
where $I(X,0)$ is the ideal of $\mathscr O_n$ of functions vanishing on $X$.

\begin{defi}
The \emph{$\mathscr A_e$-codimension} of $f$ is defined as
$$\mathscr A_e\mbox{-codim}(f)=\dim_\C\frac{\Theta(f)}{tf(\Theta_{X,0})+\omega f(\Theta_p)},$$
where $\Theta(f)$ is the set of germs at 0 of vector fields along $f$,
$$\begin{matrix}tf:&\Theta_{X,0}&\rightarrow&\Theta(f)\\&\eta&\mapsto&df\circ \eta\end{matrix}$$
and
$$\begin{matrix}\omega f:&\Theta_p&\rightarrow&\Theta(f)\\&\varsigma&\mapsto&\varsigma\circ f\end{matrix}$$ 
We say that $f$ is \emph{$\mathscr A$-finite} if this codimension is finite.
\end{defi}

\begin{thm}\label{estabilizacao}(\cite{MM}) Let $(X,0)$ be an ICIS and $f:(X,0)\to(\C^p,0)$ be a map germ with finite singularity type. The minimal number of parameters in a versal unfolding of $f$ is equal to $\mathscr A_e$-$\mbox{codim}(f)+\tau(X,0),$
where $\tau(X,0)$ is the Tjurina number of $(X,0)$, that is, the minimal number of parameters in a versal deformation of $(X,0)$.
\end{thm}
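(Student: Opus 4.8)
The plan is to follow the standard two-step scheme for versality theorems, adapted to the singular source. First I would establish an infinitesimal versality criterion: an unfolding $F$ of $f$ is versal if and only if it is \emph{infinitesimally versal}, meaning that the initial velocities $\partial F/\partial u_i|_{u=0}$ of the unfolding, together with the tangent space $tf(\Theta_{X,0})+\omega f(\Theta_p)$ to the trivial $\mathscr A$-action, span the full first-order deformation space of $f$ regarded as an unfolding. In the smooth source case this equivalence is Mather's theorem; here one proves it by the same Malgrange--Mather preparation-theorem argument, the only novelty being that deformations of $f$ must now be taken together with flat deformations of the ICIS $(X,0)$. Granting this, the minimal number of parameters in a versal unfolding equals the dimension over $\C$ of that first-order deformation space modulo the trivial directions.

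The core of the proof is therefore to identify this deformation space and compute its dimension. I would organize it around the forgetful map that sends an unfolding of $f$ to the underlying flat deformation of the source ICIS. At the level of first-order deformations this yields a short exact sequence
$$
0\longrightarrow \frac{\Theta(f)}{tf(\Theta_{X,0})+\omega f(\Theta_p)}\longrightarrow T\longrightarrow T^1_{X,0}\longrightarrow 0,
$$
where $T$ denotes the space of first-order unfoldings of $f$ modulo trivial ones and $T^1_{X,0}$ is the space of first-order deformations of $(X,0)$. The kernel consists precisely of those unfoldings that leave the source unchanged, i.e. genuine unfoldings of the map over the fixed $(X,0)$, and by the definition recalled above its dimension is $\mathscr A_e$-$\mbox{codim}(f)$. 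Since $\tau(X,0)=\dim_\C T^1_{X,0}$ is, by the definition given in the statement, the minimal number of parameters in a versal deformation of $(X,0)$, adding the two contributions gives the asserted formula $\mathscr A_e$-$\mbox{codim}(f)+\tau(X,0)$.

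Two points demand care. Exactness on the right — surjectivity of the forgetful map onto $T^1_{X,0}$ — requires showing that every infinitesimal deformation of the ICIS is realized as the source part of an unfolding of $f$; this holds because, choosing any extension $F\colon(\C^n,0)\to(\C^p,0)$ of $f$, a flat deformation $X_\epsilon\subset(\C^n,0)$ of $X$ can be equipped with the map $F|_{X_\epsilon}$, so the map imposes no obstruction on deforming the variety. The identification of the kernel with the $\mathscr A_e$-tangent space uses the isomorphism $\Theta_{X,0}\cong \Derlog(X,0)/I(X,0)\Theta_n$ recalled in the text, which is what makes the infinitesimal $\mathscr A$-action on maps over a \emph{fixed} singular source behave formally like the classical one on a smooth source.

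I expect the main obstacle to be the infinitesimal versality criterion itself. In the singular setting the module $\Theta_{X,0}$ is not free, and the finiteness needed to apply the preparation theorem rests on the hypothesis that $f$ has finite singularity type; carrying the preparation argument through while simultaneously allowing the source to deform is the delicate step, and it is there that $\tau(X,0)$ enters as the precise measure of the source's contribution.
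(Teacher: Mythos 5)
The paper itself gives no proof of this statement: it is quoted directly from Mond--Montaldi \cite{MM}, so the only meaningful comparison is with the proof there. Your infinitesimal bookkeeping is correct and agrees with what that deformation theory yields. The sequence
$$
0\longrightarrow \frac{\Theta(f)}{tf(\Theta_{X,0})+\omega f(\Theta_p)}\longrightarrow T\longrightarrow T^1_{X,0}\longrightarrow 0
$$
is exact: surjectivity on the right holds because an ICIS is unobstructed, so every flat deformation of $(X,0)$ is induced by perturbing the defining equations $\phi:(\C^n,0)\to(\C^q,0)$ inside the same ambient space and can then be equipped with the restriction of an ambient extension $\tilde f$ of $f$; and the kernel is computed exactly as you indicate, since the trivializing ambient vector fields that preserve the equations of $X$ are precisely $\Derlog(X,0)$, whence the identification with the $\mathscr A_e$-normal space via $\Theta_{X,0}\cong\Derlog(X,0)/I(X,0)\Theta_n$. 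Granting a versality criterion, $\dim_\C T=\mathscr A_e\mbox{-codim}(f)+\tau(X,0)$ is indeed the minimal number of parameters, since the initial speeds of any versal unfolding must span $T$.

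The genuine gap is the step you defer: the claim that versality is equivalent to infinitesimal versality ``by the same Malgrange--Mather preparation-theorem argument, the only novelty being that deformations of $f$ must be taken together with flat deformations of $(X,0)$.'' That novelty is not minor: the equivalences acting here are not a group of diffeomorphisms of a fixed smooth germ, $\Theta_{X,0}$ is not a free module, and one must compare unfoldings whose underlying deformations of the singular source are themselves nontrivial, so Mather's proof does not transplant verbatim. This is precisely where \cite{MM} take a different route. They encode the pair (deformation of $X$, unfolding of $f$) as a single map germ $(\tilde f,\phi):(\C^n,0)\to(\C^p\times\C^q,0)$ considered up to Damon's $\mathscr K_V$-equivalence, where $V=\C^p\times\{0\}$ so that $(\tilde f,\phi)^{-1}(V)=X$; the relevant equivalence is then a geometric subgroup in Damon's sense, and the versality theorem (the axiomatized preparation-theorem argument) applies off the shelf, with the tangent-space computation reproducing exactly your dimension count $\mathscr A_e\mbox{-codim}(f)+\tau(X,0)$. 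So your outline is completable, but its central analytic step requires either this reduction or an equivalent amount of work re-proving Damon's theorem in the singular-source category; as written, that step is asserted rather than proved.
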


As a corollary of this theorem, we have the following consequences. The second one is a generalization of the Mather-Gaffney criterion (see \cite{wall}).

\begin{cor}\label{infinitesimal} Let $(X,0)$ be an ICIS and let $f:(X,0)\to(\C^p,0)$ be an analytic map germ with finite singularity type.
\begin{enumerate}
\item $f$ is stable if and only if $X$ is smooth and $f$ is stable in the usual sense.
\item $f$ is $\mathscr A$-finite if and only if $f$ has isolated instability (i.e., there is a representative $f:X\to V$ such that for any $y\in V\setminus\{0\}$, the multigerm of $f$ at $f^{-1}(y)\cap S$ is stable).
\end{enumerate}
\end{cor}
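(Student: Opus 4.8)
The plan is to deduce Corollary~\ref{infinitesimal} from Theorem~\ref{estabilizacao}, which identifies the number of parameters in a versal unfolding with $\mathscr{A}_e\text{-codim}(f)+\tau(X,0)$. The two statements are really about two different finiteness conditions, so I would treat them separately.

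For part (1), I would unwind the definition of stability. By definition, $f$ is stable when every unfolding of $f$ is $\mathscr{A}$-trivial; equivalently, the trivial unfolding $\id\times f$ is itself versal, so that the minimal number of parameters in a versal unfolding is $0$. By Theorem~\ref{estabilizacao}, this happens exactly when $\mathscr{A}_e\text{-codim}(f)+\tau(X,0)=0$. Since both summands are non-negative, this forces $\mathscr{A}_e\text{-codim}(f)=0$ \emph{and} $\tau(X,0)=0$ simultaneously. The vanishing $\tau(X,0)=0$ is precisely the condition that $(X,0)$ be smooth, and once $X$ is smooth the module $\Theta_{X,0}$ reduces to the usual module of vector fields on the germ of a smooth space, so $\mathscr{A}_e\text{-codim}(f)=0$ becomes exactly Mather's infinitesimal stability criterion for maps between smooth germs. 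I would spell out this last identification carefully, noting that when $X$ is smooth $\Derlog(X,0)=\Theta_n$ and $I(X,0)=0$, so the isomorphism $\Theta_{X,0}\cong\Derlog(X,0)/I(X,0)\Theta_n$ recovers the ordinary tangent module.

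For part (2), the relevant notion is $\mathscr{A}$-finiteness, i.e. $\mathscr{A}_e\text{-codim}(f)<\infty$. I would argue this is equivalent to isolated instability by a standard localization/coherence argument. The key point is that the quotient module $\Theta(f)/(tf(\Theta_{X,0})+\omega f(\Theta_p))$ defines a coherent sheaf on a representative $X\to V$, whose support is exactly the instability locus of the multigerms of $f$. Finite $\mathbb{C}$-dimension of the stalk at $0$ is equivalent to this support being $\{0\}$ (isolated), which is what ``isolated instability'' means: away from $0$, the multigerm of $f$ at each fibre is stable, and by part (1) applied fibrewise this is the vanishing of the local codimension. This is the generalization of the Mather--Gaffney criterion referenced in the statement.

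The main obstacle I expect is the fibrewise/sheaf-theoretic step in part (2): establishing that the $\mathscr{A}_e$-codimension module is the stalk of a coherent $\mathscr{O}_X$-module whose support detects instability, and that finiteness of the stalk dimension is equivalent to a zero-dimensional support. In the classical case $X=\C^n$ this is the content of the Mather--Gaffney criterion, and the technical content here is checking that the ICIS structure of $(X,0)$ does not disrupt the coherence argument — in particular that singularities of $X$ itself are absorbed into the isolated instability at the distinguished point. I would lean on the infinitesimal machinery of Mond--Montaldi \cite{MM} to supply the coherence, and reduce the local stability of the multigerm away from $0$ to part (1). The passage from finite colength to an isolated support, and back, is where I would be most careful to avoid circularity with Theorem~\ref{estabilizacao}.
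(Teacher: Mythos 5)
Your overall route is the one the paper intends: the paper gives no written proof of this corollary, presenting it as an immediate consequence of Theorem \ref{estabilizacao} together with the (generalized) Mather--Gaffney criterion cited from \cite{wall}. Your part (1) -- stability $\Leftrightarrow$ the $0$-parameter unfolding is versal $\Leftrightarrow$ $\mathscr A_e$-$\mbox{codim}(f)+\tau(X,0)=0$, then splitting the two non-negative summands, with $\tau(X,0)=0$ characterizing smoothness and Mather's infinitesimal criterion finishing -- and your part (2) coherence/support argument are exactly the standard way to fill in those details.

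There are, however, two concrete flaws. First, in part (1) your stated justification of the identification is false: it is \emph{not} true that $X$ smooth implies $\Derlog(X,0)=\Theta_n$ and $I(X,0)=0$; that holds only when $X=\C^n$. In this paper $(X,0)\subset(\C^n,0)$ is typically a proper subgerm (a surface in $\C^3$ or $\C^4$), and if $(X,0)$ is smooth of dimension $d<n$ then $I(X,0)\neq 0$ and $\Derlog(X,0)\subsetneq\Theta_n$: choosing coordinates with $X=\{x_{d+1}=\dots=x_n=0\}$, a field $\sum_i a_i\,\partial/\partial x_i$ is tangent to $X$ iff $a_j\in I(X,0)$ for $j>d$. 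The conclusion you want is still true -- the quotient $\Derlog(X,0)/I(X,0)\Theta_n$ is then the free $\mathscr O_{X,0}$-module on the restrictions of $\partial/\partial x_1,\dots,\partial/\partial x_d$, i.e. the ordinary tangent module $\Theta_d$ under $(X,0)\cong(\C^d,0)$, so that $\mathscr A_e$-$\mbox{codim}(f)=0$ is Mather's criterion for the induced germ $(\C^d,0)\to(\C^p,0)$ -- but it requires this argument, not the one you wrote. Second, in part (2) the module $\Theta(f)/\bigl(tf(\Theta_{X,0})+\omega f(\Theta_p)\bigr)$ cannot be sheafified over $X$ as you suggest, because $\omega f(\Theta_p)$ is only $\mathscr O_p$-linear through $f$, not $\mathscr O_X$-linear; the Mather--Gaffney argument instead pushes $\Theta(f)/tf(\Theta_{X,0})$ forward to the target (this is where finite singularity type, i.e. finiteness of $f|_S$, is used to get coherence) and quotients by $\omega f$ there, producing a coherent sheaf on $V$ whose support is the set of target points $y$ with unstable multigerm at $f^{-1}(y)\cap S$. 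With the sheaf placed on the target, your equivalence ``finite stalk at $0$ $\Leftrightarrow$ support $\subset\{0\}$'' (via coherence and the Nullstellensatz) is correct, and the instability being measured at points where $X$ is smooth (since an ICIS is smooth off the origin) lets you invoke the multigerm version of part (1) as you propose.
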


Another consequence of the theorem is the existence of stabilizations, at least in the range of Mather's nice dimensions. Given an analytic map germ $f:(X,0)\to(\C^p,0)$, a \emph{stabilization} is a 1-parameter unfolding $F:(\mathcal X,0)\to (\C\times\C^p)$ with the property that for all $s\ne0$ small enough, the map $f_s:X_s\to B_\epsilon$ is stable, where $B_\epsilon$ is a ball of radius $\epsilon$ in $\C^p$.
By Theorem \ref{estabilizacao}, if $f:(X,0)\to(\C^p,0)$ is $\mathscr A$-finite, a stabilization of $f$ exists if $(d,p)$ are nice dimensions in the Mather's sense ($d=\dim(X,0)$). Moreover, the topological type of $f_s:X_s\to B_\epsilon$ is independent of $s$ and of the stabilization (see \cite{MM}).
 
Given a stabilization of $f$, we denote by $\Delta_s:=f_s(S_s)$ the discriminant of $f_s:X_s\to B_\epsilon$. By \cite[Theorem 2.1]{MM}, $\Delta_s$ has the homotopy type of a wedge of spheres with dimension $p-1$. The number of spheres in this wedge is called the discriminant Milnor number of $f$ and is denoted $\mu_\Delta(f)$. We also find in \cite{MM} the following remarkable theorem.

\begin{thm}\label{conjecture}(\cite{MM}) Let $(X,0)$ be an ICIS of dimension $d$ and let $f:(X,0)\to(\C^p,0)$ be $\mathscr A$-finite, with $d\ge p$ and $(d,p)$ nice dimensions. Then,
$$
\mathscr A_e\mbox{-codim}(f)+\tau(X,0)\le \mu_\Delta(f),
$$
with equality if $X$ and $f$ are weighted homogeneous (with the same weights).
\end{thm}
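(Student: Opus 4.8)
The plan is to reduce the statement to Damon's theory of $\mathscr K_V$-equivalence together with the Damon--Mond bound on the singular Milnor number of a section of a free divisor, in the form developed in \cite{MM}. Since $(d,p)$ are nice dimensions, $f$ admits a versal unfolding $F\colon(\mathcal X,0)\to(\C^r\times\C^p,0)$, and by Theorem \ref{estabilizacao} its number of parameters is exactly $r=\mathscr A_e\mbox{-codim}(f)+\tau(X,0)$. The germ $F$ is stable, so a generic value of its parameters gives a stable perturbation; its discriminant $V\subset(\C^r\times\C^p,0)$ is a hypersurface, and the first key point is to show that $V$ is a free divisor. This is where Looijenga's theorem (the discriminant of a stable map germ in the nice dimensions is free) must be combined with the freeness of the discriminant of the versal deformation of the ICIS $(X,0)$: the total discriminant $V$, which simultaneously records the critical values of $F$ and the degeneration of the fibres $X_u$, should inherit this freeness.

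The second step is to realize the pair $(f,(X,0))$ as a section of $V$. The inclusion of the special slice $\sigma_0\colon(\C^p,0)\to(\C^r\times\C^p,0)$, $y\mapsto(0,y)$, satisfies $\sigma_0^{-1}(V)=\Delta$, and deformations of $f$ together with deformations of the source ICIS correspond to deformations of $\sigma_0$ under $\mathscr K_V$-equivalence. Damon's theorem then yields the identification
$$\mathscr K_{V,e}\mbox{-codim}(\sigma_0)=\mathscr A_e\mbox{-codim}(f)+\tau(X,0),$$
the summand $\tau(X,0)$ appearing precisely because the freedom to move $\sigma_0$ off $V$ includes the smoothing directions of $(X,0)$. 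I expect this to be the main obstacle: one must match Damon's tangent-space computation for $\mathscr K_V$ against the unfolding-theoretic count of Theorem \ref{estabilizacao}, so that the Tjurina contribution is accounted for exactly once and the freeness of $V$ is genuinely used.

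For the third step, a stabilization $f_s\colon X_s\to B_\epsilon$ is obtained by perturbing $\sigma_0$ to a section $\sigma_s$ meeting $V$ as transversally as a section can; then $\sigma_s^{-1}(V)=\Delta_s$. By \cite[Theorem 2.1]{MM} this $\Delta_s$ has the homotopy type of a wedge of $(p-1)$-spheres, whose number is by definition $\mu_\Delta(f)$. In the language of singular Milnor fibres of sections of a free divisor, this says that $\mu_\Delta(f)$ is the singular Milnor number of $\sigma_0$ relative to $V$.

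Finally, I would invoke the Damon--Mond inequality: for a $\mathscr K_V$-finite section of a free divisor $V$, the singular Milnor number is at least the $\mathscr K_{V,e}$-codimension, with equality exactly when the section and $V$ are weighted homogeneous for a common system of weights. Combining this with the identifications above gives
$$\mathscr A_e\mbox{-codim}(f)+\tau(X,0)=\mathscr K_{V,e}\mbox{-codim}(\sigma_0)\le\mu_\Delta(f).$$
The equality statement follows because, when $(X,0)$ and $f$ are weighted homogeneous with the same weights, the versal unfolding $F$ and hence $V$ can be chosen weighted homogeneous, while the section $\sigma_0$ is linear; this places us in the equality case of the Damon--Mond bound. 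Thus the only ingredients beyond the smooth Damon--Mond theorem are the freeness of the combined discriminant $V$ and the correct bookkeeping of the $\tau(X,0)$ term; once these are established, both the inequality and its sharpening are formal consequences of the existing $\mathscr K_V$-machinery.
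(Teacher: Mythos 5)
The paper does not actually prove this theorem---it is quoted from Mond--Montaldi \cite{MM}---and your outline is essentially the proof given in that reference: realize $(f,(X,0))$ as a section of the discriminant $V$ of a stable unfolding, establish freeness of $V$, use Damon's identification of $\mathscr K_{V,e}$-codimension with $\mathscr A_e\mbox{-codim}(f)+\tau(X,0)$, and conclude by the Damon--Mond singular Milnor number inequality, so your route coincides with the cited one. Two details to tighten: the freeness of $V$ needs no ``combination'' of two theorems, since the total space of a versal unfolding of $f$ is smooth (versal deformations of an ICIS have smooth total space), so Looijenga's theorem applies directly to the stable germ $F$; and the Damon--Mond theorem asserts equality \emph{if} the divisor and the section are weighted homogeneous, not ``exactly when''---the converse implication is neither known in this generality nor needed for the statement.
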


\section{Maps from a surface to the plane} 

From now on, we will restrict ourselves to maps
$$
f:(X,0)\to(\C^2,0)
$$
where $(X,0)=(\phi^{-1}(0),0)\subset (\C^n,0)$ is an ICIS of dimension $2$. 

The singular locus of $f$ is
\begin{equation*}
S=v(\left\langle \phi\right\rangle+\det(J(\phi,f))),
\end{equation*}
where $J(\phi,f)$ is the Jacobian matrix of the map $(\phi,f)$, $\det$ denotes the determinant of the matrix and $v(I)$ denotes the zero locus of the ideal $I$ in $\mathscr O_n$. We will consider $S$ with the analytic structure given by this ideal.
The discriminant of $f$ is the variety $\Delta=f(S)$, where $\Delta$ is considered with the analytic structure defined by the 0th Fitting ideal (in the sense of \cite{T,MP}) of the restriction map $f|_{S}:S\to (\C^2,0)$. We remark that, since the 0th Fitting ideal is a principal ideal, $\Delta$ is a hypersurface.

\begin{lem} 
Let $(X,0)$ be an ICIS of dimension $2$ and let $f:(X,0)\to(\C^2,0)$ be a germ with finite singularity type. The following statements are equivalent:
\begin{enumerate}
  \item $f$ is $\mathscr A$-finite.
  \item $\Delta$ is a reduced curve.
  \item $S$ is a reduced curve and $f:S\to\Delta$ is one-to-one.
\end{enumerate}
\end{lem}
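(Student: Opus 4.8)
The plan is to read condition (1) through the Mather--Gaffney criterion of Corollary~\ref{infinitesimal}(2), which identifies $\mathscr A$-finiteness with isolated instability, and to read (2) and (3) through the Fitting-ideal structure of $\Delta$. I will prove (2)$\Leftrightarrow$(3) directly and then close the circle with (1)$\Rightarrow$(3) and (3)$\Rightarrow$(1). Two structural facts are used throughout. First, finite singularity type makes $f|_S$ finite, so $\dim S=\dim\Delta=1$ and both are curve germs. Second, $S$ is defined in $(\C^n,0)$ by the $n-2$ components of $\phi$ together with $\det J(\phi,f)$, i.e. by $n-1$ equations cutting out a one-dimensional germ; hence $S$ is a complete intersection, in particular Cohen--Macaulay. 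Moreover, if $f|_S$ is generically one-to-one then its multiple points lie at the pairwise intersections of the branches of $\Delta$, and distinct branches of a plane curve germ meet only at $0$; so on our germ ``generically one-to-one'' and ``one-to-one'' coincide, and I use them interchangeably.

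For (2)$\Leftrightarrow$(3) I would argue on the principal generator of the $0$th Fitting ideal $F_0$ of $(f|_S)_*\mathscr O_S$, which cuts out $\Delta$. By the theory of Fitting ideals for finite maps \cite{T,MP}, for a finite map from a Cohen--Macaulay curve the image $v(F_0)$ is reduced exactly when the source is reduced and the map is generically one-to-one onto its image: a non-reduced component of $S$, or a component of $\Delta$ covered with degree $\ge2$, produces a repeated factor in the generator of $F_0$. Together with the remark above, this is precisely the equivalence of (2) and (3).

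For (1)$\Rightarrow$(3) I would assume $f$ is $\mathscr A$-finite and use Corollary~\ref{infinitesimal}(2) to pass to a representative with isolated instability, so that over each $y\ne0$ the multigerm of $f$ is stable, hence a fold, a cusp, or a double fold. A double fold is a transverse crossing of two branches of $\Delta$, which cannot occur at a $y\ne0$ because the branches of $\Delta$ meet only at $0$; so away from $0$ only monogerm folds and cusps appear. For these $S$ is smooth, hence generically reduced, and being a Cohen--Macaulay curve it is therefore reduced, giving the first half of (3). Since the fibre of $f$ over $0$ is the single point $0$ and each $y\ne0$ carries a single critical preimage, $f|_S$ is one-to-one, giving the second half.

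For (3)$\Rightarrow$(1), the converse and the crux, I would assume $S$ reduced and $f|_S$ one-to-one and show the instability locus of $f$ equals $\{0\}$, whence (1) follows from Corollary~\ref{infinitesimal}(2). As $(X,0)$ is an ICIS, $\operatorname{Sing}(X)=\{0\}$, so every critical point near $0$ lies on the smooth part of $X$; as $S$ is reduced it is smooth off $0$; and as $f|_S$ is one-to-one there are no bigerms over points $y\ne0$. Hence over each $y\ne0$ the germ is a monogerm along the smooth critical curve $S$, where the fold points form a dense open set; the unstable locus is therefore a proper analytic subset of the curve $S$, i.e. $0$-dimensional, and so reduces to $\{0\}$ after shrinking the representative. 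The hard part is exactly this --- ruling out that instability spreads into a curve --- and the argument rests on three exclusions: reducedness of $S$ forbids instability along a multiple critical component (as for $(x,y)\mapsto(x,y^3)$), injectivity of $f|_S$ forbids it along a double-point locus (as for $(x,y)\mapsto(x^2,y^2)$), and the isolated singularity of $X$ forbids it along $\operatorname{Sing}(X)$.
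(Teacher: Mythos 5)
Your architecture is genuinely different from the paper's. The paper proves (1)$\Leftrightarrow$(2) entirely on the target side: $\mathscr A$-finiteness gives stability off the origin via Corollary \ref{infinitesimal}, so the only singularities of $\Delta$ are isolated cusps and nodes, forcing the principal Fitting generator to be reduced; conversely, if $\Delta$ is reduced it is smooth off $0$, and smooth points of $\Delta$ force stable fold points (the content of the later Lemma \ref{deltasmooth}). The equivalence (2)$\Leftrightarrow$(3) is then quoted from \cite[Lemma 2.3]{MNP}. You instead route the circle through (3), working on the source side with the critical curve $S$: your (2)$\Leftrightarrow$(3) is the same Fitting-ideal fact the paper delegates to \cite{MNP}, and your (1)$\Rightarrow$(3) is correct --- indeed, in a small representative the branches of $\Delta$ are smooth off $0$ and pairwise meet only at $0$, so away from the origin there are no cusps either, not just no double folds; your fold/cusp-plus-Serre argument for reducedness of $S$ then works. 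The source-side route has the merit of never needing the presentation-matrix argument of Lemma \ref{deltasmooth}; the paper's target-side route is shorter because stability instantly gives the local models of $\Delta$.

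There is, however, a gap in (3)$\Rightarrow$(1), at exactly the step you call the crux. You assert that along the smooth reduced critical curve "the fold points form a dense open set", and you summarize the proof as resting on three exclusions: reducedness of $S$, injectivity of $f|_{S}$ against double points, and $\operatorname{Sing}(X)=\{0\}$. These three do not rule out a fourth way instability can spread along a curve: a reduced, smooth branch of $S$ on which $f|_{S}$ fails to be an immersion at every point, that is, a branch contracted by $f$. The germ $f(x,y)=(x,xy)$ on $X=\C^2$ shows that density of folds is not a formal consequence of ``monogerm with smooth reduced critical curve'': there $S=\{x=0\}$ is smooth and reduced, yet no point of $S$ is a fold and every point of $S$ is unstable. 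In your setting the claim is true, but only because of hypotheses not wired into this step: non-fold points of $S\setminus\{0\}$ are precisely the points where $f|_{S}$ is not an immersion, an analytic subset of $S$; if it contained a whole branch, $f$ would be constant on that branch, contradicting finiteness of $f|_{S}$ (finite singularity type) and also the injectivity of $f\colon S\to\Delta$ assumed in (3). Adding that one line --- so the non-immersive locus has germ $\{0\}$ and, after shrinking, every critical point off the origin is a fold --- closes your (3)$\Rightarrow$(1) correctly.
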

\begin{proof}

Assume $f$ is $\mathscr A$-finite. By the second part of Corollary \ref{infinitesimal}, $f$ has an isolated instability. Moreover, by the first part, for all the points $x\in X-\{0\}$, $f$ is stable in the usual sense and therefore, the only singularities of $\Delta$ are cusps and nodes, which are isolated, so $\Delta$ is reduced.

Conversely, assume $\Delta$ is reduced, then the only singular point of $\Delta$ is the origin, in a small enough representative. Since $(f|_{S})^{-1}(0)=\{0\}$, $f$ is stable outside the origin and hence, $f$ is $\mathscr A$-finite by Corollary \ref{infinitesimal}.

The statements (2) and (3) are equivalent by \cite[Lemma 2.3]{MNP}.

\end{proof}

If $f:(X,0)\to(\C^2,0)$ is $\mathscr A$-finite, then $S$ and $\Delta$ are reduced curves, so it makes sense to consider their Milnor numbers $\mu(S)$ and $\mu(\Delta)$ ($\Delta$ is a plane curve and for $S$ we take the definition of Buchweitz-Greuel \cite{BG}). Another curve which is relevant to understand the geometry of $f$ is $f^{-1}(\Delta)$. Again this is a reduced curve and we consider its Milnor number $\mu(f^{-1}(\Delta))$. Other important geometrical invariants are the numbers $c,d$, which are defined as follows.

\begin{defi}\label{defi-cd}
Let $f:(X,0)\to(\C^2,0)$ be $\mathscr A$-finite, where $(X,0)$ is a 2-dimensional ICIS.
Given $F:(\mathcal X,0)\to(\C\times\C^2,0)$ a stabilization of $f$, we fix a small representative
$F:\mathcal X\to D\times B_\epsilon$, where $D$ is an open neighbourhood of $0$ in $\C$ and $B_\epsilon$ is a ball of radius $\epsilon$ in $\C^2$.
We define:
\begin{itemize}
\item $c=$ number of cusps of $f_s:X_s\to B_\epsilon$,
\item $d=$ number of double folds of $f_s:X_s\to B_\epsilon$,
\end{itemize}
for $s\in D-\{0\}$. We will deduce from Proposition \ref{cusps} and Theorem \ref{1} that these numbers are independent of $s$ and of the stabilization.
\end{defi}
%We remark that $c(f)$ and $d(f)$ could be defined if $\dim X>2$.
      
Inspired by the work of Gaffney and Mond (\cite{GM}), we will show how to compute these invariants algebraically and we present formulas relating them. Before doing it, we remark that the surfaces $S(F)$, $\Delta(F)$ and $F^{-1}(\Delta(F))$ are flat deformations of $S$, $\Delta$ and $f^{-1}(\Delta)$, respectively. In fact, $\Delta(F)$ is a hypersurface in $(\C^3,0)$ and $S(F)$, $F^{-1}(\Delta(F))$ are complete intersections in $(\C^{n+1},0)$. Thus, all of them are Cohen-Macaulay and the projection onto the parameter space is flat. Another important fact is that $S(F)$ is a smoothing of $S$, since $S_s$ is smooth if $f_s$ is stable.

%Gaffney and Mond showed in \cite{GM} that, in the case when $X=\C^2$, the aforementioned numbers relate in the following way:
%
%\begin{equation*}
%\mu(\Delta(f))=\mu(S)+2c(f)+2d(f)
%\end{equation*}
%and
%\begin{equation*}
%c(f)=\mu(S)+m(f)-2.
%\end{equation*}

In \cite{GM}, Gaffney and Mond show how to compute the number of cusps $c$ as the dimension of an algebra. In our case, we have the following result.

\begin{prop}\label{cusps}
Let $\phi:(\C^n,0)\to (\C^{n-2},0)$ be an analytic map germ such that $(X,0)=(\phi^{-1}(0),0)$ is an ICIS. Let $f:(X,0)\to(\C^2,0)$ be an $\mathscr A$-finite map germ.
Then,
\begin{equation*}
c=\dim_{\C}\frac{\mathscr O_n}{\left\langle \phi\right\rangle+I_n(J(f,\phi,\delta))},
\end{equation*}
where $\delta=\det(J(f,\phi))$ and $I_n(J(f,\phi,\delta))$ is the ideal generated by the minors of size $n$ of $J(f,\phi,\delta)$.
\end{prop}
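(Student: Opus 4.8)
The plan is to read the ideal $\langle\phi\rangle+I_n(J(f,\phi,\delta))$ as the defining ideal of the cusp locus of $f$, and then to obtain the formula by conservation of multiplicity under the stabilization $F$.

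First I would pin down the geometric meaning of the rank condition. Write $G=(f,\phi,\delta)\colon(\C^n,0)\to(\C^{n+1},0)$, so that $J(f,\phi,\delta)$ is the $(n+1)\times n$ Jacobian $JG$. At a smooth point $x$ of $X$ lying on $S$ (so $\phi(x)=0$ and $\delta(x)=0$) the $n\times n$ block $J(f,\phi)$ has rank $n-1$, and its kernel is spanned by a single vector $\eta$, which is exactly the kernel direction of $df|_{T_xX}$. Since $\eta\in\ker d\phi$ already, adjoining the row $d\delta$ makes $J(f,\phi,\delta)$ drop rank below $n$ precisely when $d\delta(\eta)=0$, i.e. when $\eta\in T_xS$. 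By Whitney's normal forms, this tangency of the kernel direction to $S$ is exactly the condition that $x$ be a cusp; double folds are unaffected, being folds at each of the two preimages. Hence, set-theoretically, $v(\langle\phi\rangle+I_n(J(f,\phi,\delta)))$ is the cusp locus of $f$.

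Next I would set up conservation of number. Applying the same construction fibrewise to $F(s,x)=(s,f_s(x))$ produces a relative cusp scheme $\mathcal C=v(\langle\Phi\rangle+I_n(J_x(f_s,\phi_s,\delta_s)))$ in the total space, where $J_x$ denotes partials in $x$ only and $\Phi$ is the deformation of $\phi$. The locus where an $(n+1)\times n$ matrix has rank $\le n-1$ has expected codimension $2$, and intersecting it with the relative complete intersection $\mathcal X=v(\Phi)$, which is Cohen-Macaulay of dimension $3$, imposes codimension $2$. Because $f$ is $\mathscr A$-finite it is stable off the origin, so its cusps are isolated there; thus the cusp set on $X\setminus\{0\}$ carries no $1$-dimensional component, the origin is isolated in the cusp locus of $f$, and $\mathcal C\to D$ has $0$-dimensional fibres. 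Therefore $\mathcal C$ is a determinantal scheme of the expected codimension, hence Cohen-Macaulay of dimension $1$, and being finite over the smooth curve $D$ it is flat by miracle flatness.

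Finally I would read off the two fibres. For $s\ne 0$ the map $f_s$ is stable, and a direct computation in the Whitney cusp normal form $(x,y)\mapsto(x,y^3+xy)$ shows the cusp ideal has colength $1$ at each cusp; hence the length of the fibre $\mathcal C_s$ equals the number $c$ of cusps of $f_s$. By flatness this length is constant in $s$, and the fibre over $s=0$, supported at the origin, has length $\dim_\C \mathscr O_n/(\langle\phi\rangle+I_n(J(f,\phi,\delta)))$; comparing the two yields the formula, and the independence of $c$ from $s$ and from the stabilization comes along for free. The main obstacle is the Cohen-Macaulay/flatness step: one must verify that $\mathscr A$-finiteness forces the augmented Jacobian $J(f,\phi,\delta)$ to drop rank only in the expected codimension $2$, so that the determinantal structure theorems apply and conservation of number is legitimate; the fibrewise length-one computation at a genuine cusp is then a routine normal-form check.
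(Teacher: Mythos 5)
Your proposal is correct and follows essentially the same route as the paper: both form the relative cusp scheme $v(\langle\Phi\rangle+I_n(J_x(f_s,\phi_s,\delta_s)))$ over a stabilization, invoke Eagon--Hochster to conclude it is determinantal of the expected codimension and hence Cohen--Macaulay of dimension one, and then equate the fibre over $s=0$ with the fibre over $s\neq 0$ by conservation of number (the paper phrases this via the Samuel multiplicity $e(\langle s\rangle,\cdot)$, you via miracle flatness, which amounts to the same thing). Your two supplementary checks---the kernel-tangency characterization of cusps and the length-one computation in the Whitney normal form---are details the paper's proof uses implicitly, so they strengthen rather than change the argument.
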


\begin{proof}
Let $F:(\mathcal X,0)\to(\C\times\C^2,0)$ be a stabilization of $f$ like in the previous section. Let $\Phi:(\C\times\C^n,0)\to(\C^{n-2},0)$ be a deformation of $\phi$ so that $\Phi^{-1}(0)=(\mathcal X,0)$. Put $M_F=J(f_s,\phi_s,\delta_s)$, where $\delta_s=\det(J(f_s,\phi_s))$ (the Jacobian considered just in the variables $x$). Note that $\delta_s$ is one of the $n$-minors of $M_F$, so the set
\begin{equation*}
(Y,0)=v(I_n(M_F)+\left<\Phi\right>)\subset (\C^{n+1},0)
\end{equation*}
is given by the pairs $(s,x)$ such that either $(s,x)=(0,0)$ or $s\ne0$ and $f_s$ has a cusp at $x\in X_s$. Therefore, $\dim (Y,0)=1$ and, since
$$
\dim \mathscr O_{\mathcal X,0}-[(n+1)-n+1](n-n+1)=1=\dim\frac{\mathscr O_{\mathcal X,0}}{I_n(M_F)},
$$  
we conclude, by the results of Eagon-Hochster (\cite{EH}), that $(Y,0)$ is a determinantal variety and hence, Cohen-Macaulay.

We consider, now, the map
$$\begin{matrix}\pi:&(Y,0)&\to&(\C,0)\\
&(s,x)&\mapsto&s\end{matrix}.$$
For $s\ne0$ small enough, since $(Y,0)$ is Cohen-Macaulay,  we have:
\begin{align*}
c=\#\pi^{-1}(s)&=e(\left<s\right>,\frac{\mathscr O_{\mathcal X,0}}{I_n(M_F)})=\dim_\C\frac{\mathscr O_{\mathcal X,0}}{I_n(M_F)+\left<s\right>}\\&=\dim_\C\frac{\mathscr O_{X,0}}{I_n(J(f,\phi,\delta))},
\end{align*}
where $e(I,R)$ denotes the Samuel multiplicity of an ideal of definition $I$ in a Noetherian local ring $R$.
\end{proof}

%\begin{ex}
%Let $(X,0)\subset(\C^3,0)$ be the ICIS defined by the zero locus of the map $\phi(x,y,z)=x^2+y^2-z^2$ and let $f:(X,0)\to(\C^2,0)$ be the map germ defined by $f(x,y,z)=(x^3+z^2,y^2+z^2)$. In this case, $c(f)=18$.
%\end{ex}

Now, the number of double folds $d$ can be calculated by the following theorem, as it was done in \cite{GM,Rieger} for maps $f:(\C^2,0)\to(\C^2,0)$.

\begin{teo}\label{1}
Let $(X,0)$ be an ICIS of dimension $2$ and let $f:(X,0)\to(\C^2,0)$ be an $\mathscr A$-finite map germ.
Then,
\begin{equation*}
c+d=\frac{1}{2}\left[\mu(\Delta)-\mu(S)\right].
\end{equation*}
\end{teo}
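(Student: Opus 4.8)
The plan is to compute both Milnor numbers through the stabilization $f_s\colon X_s\to B_\epsilon$ and to compare them. Recall that $\mu(S)$ and $\mu(\Delta)$ are, by definition, the first Betti numbers of the Milnor fibres of the curves $S$ and $\Delta$. Since $S(F)$ is a smoothing of $S$, the smooth curve $S_s$ is exactly such a Milnor fibre, so $\mu(S)=b_1(S_s)$. The idea is to express $b_1(\Delta_s)$ in terms of $b_1(S_s)$ and $d$, and then to recover $\mu(\Delta)$ from $b_1(\Delta_s)$ by accounting for the singular points that survive in the partial smoothing $\Delta_s$. Concretely, I would establish the two identities
$$b_1(\Delta_s)=b_1(S_s)+d,\qquad \mu(\Delta)=b_1(\Delta_s)+2c+d,$$
from which $\mu(\Delta)=\mu(S)+2c+2d$, and hence the statement, follow at once.

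For the first identity I would use that, $f_s$ being stable, $S_s$ is smooth and the restriction $f_s|_{S_s}\colon S_s\to\Delta_s$ is finite and generically one-to-one, hence is the normalisation of $\Delta_s$. By Whitney's classification the only singularities of $f_s$ are folds, cusps and double folds: at a fold the map is a local embedding, at a cusp it is a homeomorphism onto a unibranch singular point, and at a double fold two distinct points of $S_s$ are glued to a single node of $\Delta_s$. Thus, as a topological space, $\Delta_s$ is obtained from the connected curve $S_s$ by identifying the $d$ pairs of preimages of the double folds, the cusps being topologically inert. Each such identification of two distinct points raises the first Betti number by one, giving $b_1(\Delta_s)=b_1(S_s)+d$.

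The second identity is the main point, and where I expect the real work to be. I would invoke conservation of vanishing cycles in the flat family $\Delta(F)\to D$: the Milnor fibre of $\Delta$ is a full smoothing of $\Delta$, and it can be obtained from $\Delta_s$ by further smoothing its $c$ cusps and $d$ nodes inside the Milnor ball. Comparing Euler characteristics — replacing, near each singular point $p$, the contractible local curve (a cone over the link, of $\chi=1$) by the corresponding local Milnor fibre, of $\chi=1-\mu(\Delta_s,p)$ — yields $b_1(\text{full smoothing})=b_1(\Delta_s)+\sum_p\mu(\Delta_s,p)$, since all the relevant curves are connected and $b_1=1-\chi$. As a cusp has $\mu=2$ and a node has $\mu=1$, the singular contribution is $2c+d$, whence $\mu(\Delta)=b_1(\Delta_s)+2c+d$. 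The delicate points are the flatness and connectedness statements (both available from the discussion preceding the theorem, where $\Delta(F)$ is shown to be a flat deformation and $\Delta_s$ is the image of the connected $S_s$), and the verification that smoothing the cusps and nodes genuinely completes $\Delta_s$ to the Milnor fibre of $\Delta$.

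Substituting the first identity into the second gives $\mu(\Delta)=b_1(S_s)+d+2c+d=\mu(S)+2c+2d$, which is the claimed formula. An equivalent route, which I would keep in reserve, uses $\mu=2\delta-r+1$ for both curves together with the fact that $f|_S\colon S\to\Delta$ is a bijection of reduced curves, so that $r(S)=r(\Delta)$ and $\mu(\Delta)-\mu(S)=2\bigl(\delta(\Delta)-\delta(S)\bigr)$; one then identifies $\delta(\Delta)-\delta(S)$ with $\dim_\C\mathscr O_S/\mathscr O_\Delta$ and shows, by conservation of $\delta$ in the family, that this difference equals $c+d$.
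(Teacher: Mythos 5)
Your proposal is correct, but your main argument takes a genuinely different route from the paper's. The paper works with delta invariants: since $S(F)$ is Cohen--Macaulay and satisfies Serre's condition R1, it is normal, so $F|_{S(F)}:S(F)\to\Delta(F)$ is the normalization of $\Delta(F)$; Teissier's result then gives $\delta(\Delta_s)=\delta(\Delta)-\delta(S)$, the local contributions of cusps and nodes (each with $\delta=1$) give $c+d=\delta(\Delta)-\delta(S)$, and the formula follows from $\mu=2\delta-r+1$ together with $r(S)=r(\Delta)$, which holds because $f:S\to\Delta$ is one-to-one. That is precisely the route you sketch ``in reserve'' at the end. Your main argument instead does Euler-characteristic bookkeeping on the stable fibre: $b_1(\Delta_s)=b_1(S_s)+d$ because $\Delta_s$ is topologically $S_s$ with $d$ pairs of points identified (cusps and folds being topologically inert), and $\mu(\Delta)=b_1(\Delta_s)+2c+d$ by conservation of Milnor number in the flat family of plane curves $\Delta(F)\to D$ --- a fact the paper itself invokes, citing Buchweitz--Greuel, in the proof of Theorem \ref{2}. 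So both proofs are conservation arguments through the stabilization: the paper conserves $\delta$, you conserve $\mu$ (vanishing cohomology). The paper's route is shorter and needs neither connectedness of the fibres nor anything special to plane curves; yours produces as by-products the identities $\mu(\Delta_s)=2c+d$ and $\mu(\Delta)=\mu(S)+2c+2d$, which the paper has to re-derive later (in Theorem \ref{2} and Lemma \ref{32}). Two points you should make explicit to be fully rigorous: first, $\mu(S)=b_1(S_s)$ is a theorem, not a definition --- for a space curve the Buchweitz--Greuel number is $\mu=2\delta-r+1$, and its agreement with $b_1$ of a smoothing holds here because $S$ is a $1$-dimensional ICIS (for general space curves one only gets an inequality); second, the connectedness of $S_s$, hence of $\Delta_s$, needed to pass between $\chi$ and $b_1$, holds because Milnor fibres of positive-dimensional ICIS are connected.
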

\begin{proof}
Let $F:(\mathcal X,0)\to(\C^2,0)$ be a stabilization of $f$ and take a representative as in Definition \ref{defi-cd}. Since the singular set of $F$, $S(F)$ is a Cohen-Macaulay variety and satisfies Serre's R1 condition, we have that $S$ is normal and, hence, $F|_{S(F)}:S(F)\to \Delta(F)$ is a normalization of $\Delta(F)$. Therefore, from \cite[Page 607]{T} we have
\begin{equation*}
\delta(\Delta_s)=\delta(\Delta)-\delta(S),
\end{equation*} 
where $\delta(.)$ denotes the delta invariant of the curve and $\Delta_s$ is the discriminant of $f_s$. We remark that the left-hand side is a sum of local contributions from the singularities of
$\Delta_s$, which are cusps (corresponding to cusps and each contributing 1) and nodes (correponding to double folds and each contributing 1), then
\begin{equation*}
c+d=\delta(\Delta)-\delta(S).
\end{equation*} 
From the Milnor formula, we know that $\delta(\Delta)=\frac{1}{2}(\mu(\Delta)+r(\Delta)-1)$ and $\delta(S)=\frac{1}{2}(\mu(S)+r(S)-1)$, where $r(.)$ denotes the number of branches of the curve. 
However, since the map $f:S\to\Delta$ is one-to-one, we have that $r(\Delta)=r(S)$. Thus,
\begin{equation*}
c+d=\frac{1}{2}[\mu(\Delta)-\mu(S)].
\end{equation*}
\end{proof}

\begin{remark}
As it was done in \cite{GM} for the smooth case,  if $(X,0)$ is an ICIS of dimension $2$ and $f:(X,0)\to(\C^2,0)$ is an $\mathscr A$-finite map germ, then
\begin{equation*}
c+d=\dim_\C\frac{\mathscr O_2}{\mathcal F_1(f|_{S})},
\end{equation*}
where $\mathcal F_1(f|_{S})$ is the first Fitting ideal of $\mathscr O_S$ as $\mathscr O_2$-module via $f$. This is a consequence of \cite[Theorem 3.6]{MP}.
\end{remark}

In the case when $X:=\C^2$, that is, if we consider a map germ $f:(\C^2,0)\to(\C^2,0)$ with an isolated instability, Gaffney and Mond, in \cite{GM}, show that
\begin{equation}\label{gm2}
c=\mu(S)+m-2,
\end{equation}
where $m$ is the degree of $f$. In order to generalize this formulas, we will need the following lemma, which is a generalization of the well known Riemann-Hurwitz formula.

\begin{lem} Let $X$ and $Y$ be analytic sets and $f:X\to Y$ be a map.  We denote by $B$ the branch locus of $f$ and $Z=f^{-1}(B)$. Assume that $f:X-f^{-1}(B)\to Y-B$ is a degree $m$ covering. Then
\begin{equation}\label{RH}
\chi(X)=m\chi(Y)+\chi(Z)-m\chi(B),
\end{equation}
where $\chi(\cdot)$ denotes the Euler characteristic.
\end{lem}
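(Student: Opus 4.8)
The plan is to prove the formula from two structural properties of the Euler characteristic: its additivity over a decomposition of a space into an open part and a closed complement, and its multiplicativity over finite covering maps. The one subtlety is that additivity over open/closed decompositions is a property of the Euler characteristic \emph{with compact supports}, which I will write $\chi_c$, rather than of the naive alternating sum of Betti numbers. I would therefore first establish the identity for $\chi_c$ and then reconcile $\chi_c$ with $\chi$ on the four spaces named in the statement; this reconciliation is the genuinely delicate point.

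First I would write $X$ as the disjoint union of the closed subset $Z=f^{-1}(B)$ and the open set $X\setminus Z=X\setminus f^{-1}(B)$, and likewise $Y=B\sqcup(Y\setminus B)$. Additivity of $\chi_c$ then gives
\[
\chi_c(X)=\chi_c(X\setminus Z)+\chi_c(Z),\qquad \chi_c(Y\setminus B)=\chi_c(Y)-\chi_c(B).
\]
Next I would exploit that $f\colon X\setminus f^{-1}(B)\to Y\setminus B$ is a degree $m$ covering. Fixing a triangulation of $Y\setminus B$, over each open simplex (being contractible) the covering is trivial, so it pulls back the triangulation to one of $X\setminus Z$ in which every base simplex is covered by exactly $m$ simplices of the same dimension. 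The monodromy of the covering is irrelevant here, since only the \emph{number} of cells of each dimension, not the way they are glued, enters the alternating sum. Hence $\chi_c(X\setminus Z)=m\,\chi_c(Y\setminus B)$, and substituting yields
\[
\chi_c(X)=m\big(\chi_c(Y)-\chi_c(B)\big)+\chi_c(Z),
\]
which is the asserted identity for $\chi_c$.

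Finally I would remove the subscript. Because $X,Y,Z,B$ are understood with their compact complex-analytic representatives, $\chi_c$ and the ordinary $\chi$ agree on each of them, and the formula descends to the ordinary Euler characteristic exactly as stated. The main obstacle is precisely this last reconciliation: the additive and multiplicative bookkeeping is clean only for $\chi_c$, so one must be certain that the four spaces in the statement satisfy $\chi_c=\chi$. This holds because each is compact (or, more generally, because it is a complex algebraic set, where $\chi_c=\chi$ by weight considerations); the existence of the compatible triangulations used above follows from the triangulability of subanalytic maps. Everything else is formal manipulation of the two displayed identities.
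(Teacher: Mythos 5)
Your proof is correct, and its skeleton is the same as the paper's: decompose $X=Z\sqcup(X\setminus Z)$ and $Y=B\sqcup(Y\setminus B)$, apply additivity of the Euler characteristic across these decompositions, and apply multiplicativity over the degree $m$ covering. The only real difference is where the complex-geometric input enters. The paper works with the ordinary $\chi$ throughout and justifies the additivity $\chi(X)=\chi(Z)+\chi(X\setminus Z)$ and $\chi(Y)=\chi(B)+\chi(Y\setminus B)$ by the single remark that $Z\subset X$ and $B\subset Y$ have even real codimension; you instead work with $\chi_c$, for which additivity is automatic, and concentrate all the geometric content in the final reconciliation $\chi_c=\chi$ on the four spaces. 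These are two packagings of the same fact about complex analytic sets, so the routes are essentially equivalent; yours makes explicit exactly which property is being used (and why the naive $\chi$ is not additive in general), while the paper's is shorter. One caveat on your last step: the justification ``because each is compact'' does not apply here --- in the paper's application the spaces are Milnor fibres, open balls and discriminants inside them, none of which is compact, and since $\chi_c$ is not a homotopy invariant you cannot pass to compact models. Your fallback justification is the correct one: $X$, $Y$, $Z$, $B$ are complex analytic sets, and for such sets $\chi_c=\chi$ --- equivalently, they are stratified by even-dimensional strata, which is precisely the even-codimension point the paper invokes.
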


\begin{proof} 
Since the map $f|_{X-Z}:X-Z\to Y-B$ is a covering map with $m$ sheets,
\begin{equation*}\chi(X-Z)=m\chi(Y-B).\end{equation*}

Furthermore, since the real codimension of $Z$ in $X$ and the real codimension of $B$ in $Y$ are even, we have that $\chi(X-Z)=\chi(X)-\chi(Z)$ and $\chi(Y-B)=\chi(Y)-\chi(B)$.
Therefore,
\begin{eqnarray*}
\chi(X)&=&\chi(X-Z)+\chi(Z)\\
&=&m\chi(Y-B)+\chi(Z)\\
&=&m\chi(Y)-m\chi(B)+\chi(Z).\end{eqnarray*}
\end{proof}

\begin{thm}\label{2}
If $(X,0)$ is a $2$-dimensional ICIS and $f:(X,0)\to(\C^2,0)$ is $\mathscr A$-finite, then
\begin{equation*}
c+\mu(X,0)=\mu(S)+m-2,
\end{equation*}
where $m$ is the degree of $f$.
\end{thm}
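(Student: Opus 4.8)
The plan is to apply the Riemann--Hurwitz formula \eqref{RH} to a stabilization and then evaluate each Euler characteristic in terms of $\mu(X,0)$, $\mu(S)$, $c$ and $d$. I fix a good representative $f_s\colon X_s\to B_\epsilon$ of a stabilization as in Definition \ref{defi-cd}; for $s\ne 0$ this is a finite branched covering of degree $m$ whose branch locus is the discriminant $\Delta_s=f_s(S_s)$, with $c$ cusps and $d$ nodes by Whitney's theorem. Taking $X=X_s$, $Y=B_\epsilon$, $B=\Delta_s$ and $Z=f_s^{-1}(\Delta_s)$ in \eqref{RH} yields
\[
\chi(X_s)=m\,\chi(B_\epsilon)+\chi(f_s^{-1}(\Delta_s))-m\,\chi(\Delta_s).
\]
Since $B_\epsilon$ is contractible, $\chi(B_\epsilon)=1$; and since $X_s$ is the Milnor fibre of the $2$-dimensional ICIS $(X,0)$, which has the homotopy type of a wedge of $\mu(X,0)$ copies of $S^2$, we have $\chi(X_s)=1+\mu(X,0)$.

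Next I would compute $\chi(\Delta_s)$. As in the proof of Theorem \ref{1}, the restriction $f_s|_{S_s}\colon S_s\to\Delta_s$ is the normalization of $\Delta_s$: it is one-to-one over the smooth points and over the $c$ cusps (which are unibranch) and two-to-one over the $d$ nodes. Hence, by additivity of the Euler characteristic over this stratification, $\chi(S_s)=\chi(\Delta_s)+d$. Because $S$ is a reduced $1$-dimensional ICIS and $S_s$ is its smoothing, a Milnor fibre homotopy equivalent to a wedge of $\mu(S)$ circles, we have $\chi(S_s)=1-\mu(S)$, and therefore $\chi(\Delta_s)=1-\mu(S)-d$.

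The crux is $\chi(f_s^{-1}(\Delta_s))$. I would stratify $\Delta_s$ into its smooth locus $\Delta_s^{\mathrm{reg}}$, the $c$ cusps and the $d$ nodes, and count the number of distinct points in each fibre using the Whitney normal forms: over a generic (fold) value there are $m-1$ points (two sheets meet at the fold point, $m-2$ sheets remain regular), over each cusp value there are $m-2$ points (three sheets meet, plus $m-3$ regular), and over each node value there are again $m-2$ points (two fold points, plus $m-4$ regular). Thus $f_s^{-1}(\Delta_s^{\mathrm{reg}})\to\Delta_s^{\mathrm{reg}}$ is an $(m-1)$-sheeted covering, while the fibres over cusps and nodes are finite sets of $m-2$ points, so additivity gives
\[
\chi(f_s^{-1}(\Delta_s))=(m-1)\bigl(\chi(\Delta_s)-c-d\bigr)+(m-2)(c+d)=(m-1)\chi(\Delta_s)-(c+d).
\]
Substituting this together with $\chi(X_s)=1+\mu(X,0)$ into the displayed Riemann--Hurwitz identity, the terms involving $\chi(\Delta_s)$ collapse to $-\chi(\Delta_s)-(c+d)$, so that $1+\mu(X,0)=m-\chi(\Delta_s)-(c+d)$; inserting $\chi(\Delta_s)=1-\mu(S)-d$ then gives $1+\mu(X,0)=m-1+\mu(S)-c$, which is exactly $c+\mu(X,0)=\mu(S)+m-2$.

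I expect the main obstacle to be the computation of $\chi(f_s^{-1}(\Delta_s))$: one must verify the local structure of $f_s$ at folds, cusps and double folds to get the correct fibre cardinalities, and justify the additivity of the Euler characteristic over the complex-analytic stratification of $\Delta_s$ — most safely through the equality $\chi=\chi_c$ for complex-analytic sets together with the multiplicativity of $\chi_c$ for the covering over each stratum. The remaining ingredients (the Euler characteristics of the ICIS Milnor fibres $X_s$ and $S_s$ and the normalization description of $\Delta_s$) are standard.
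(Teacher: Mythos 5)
Your proof is correct, and its skeleton matches the paper's: apply the Riemann--Hurwitz formula \eqref{RH} to $f_s\colon X_s\to B_\epsilon$ on a stabilization, compute $\chi(f_s^{-1}(\Delta_s))=(m-1)\chi(\Delta_s)-(c+d)$ by sheet-counting over the stratification of $\Delta_s$ (your counts --- $m-1$ points over a fold value, $m-2$ over a cusp, $m-2$ over a node --- are exactly the ones the paper uses, and your bookkeeping at nodes, two fold preimages plus $m-4$ regular ones, is in fact stated more accurately than in the paper's prose), and use $\chi(X_s)=1+\mu(X,0)$. The one genuine divergence is the evaluation of $\chi(\Delta_s)$. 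The paper writes $\chi(\Delta_s)=1-\mu(\Delta)+\mu(\Delta_s)$ by Buchweitz--Greuel applied to the plane curve $\Delta$, computes $\mu(\Delta_s)=2c+d$ from the local singularities of $\Delta_s$, and then eliminates $\mu(\Delta)$ using Theorem \ref{1}; so the paper's proof of Theorem \ref{2} depends on Theorem \ref{1}, hence on Teissier's delta-invariant formula for the normalization $S(F)\to\Delta(F)$. You instead read $\chi(\Delta_s)$ off the normalization $f_s|_{S_s}\colon S_s\to\Delta_s$ fibrewise: $\chi(S_s)=1-\mu(S)$ because $S_s$ is a smoothing of the curve ICIS $S$, and the normalization identifies two points over each of the $d$ nodes and one over each cusp, giving $\chi(\Delta_s)=1-\mu(S)-d$ directly. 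What this buys is logical independence: your argument never invokes Theorem \ref{1}, whereas the paper's does; the cost is negligible, since both routes rest on the same smoothing and normalization facts, and the paper's version has the side benefit of recording $\mu(\Delta_s)=2c+d$, which it reuses in Corollary \ref{mudelta} and Lemma \ref{32}.
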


\begin{proof}
Let $F:(\mathcal X,0)\to(\C^2,0)$ be a stabilization of $f$ and take a representative as in Definition \ref{defi-cd}.
First, we apply the formula (\ref{RH}) to each map
$$f_s:f_s^{-1}(\Delta_s)\to \Delta_s.$$
In this case, $B$ is the set of the cusps and the nodes, we have
\begin{eqnarray*}
\chi(f_s^{-1}(\Delta_s))&=&(m-1)\chi(\Delta_s)+\chi(f^{-1}(B))-(m-1)\chi(B)\\
&=&(m-1)\chi(\Delta_s)+(m-2)(c+d)-(m-1)(c+d)\\
&=&(m-1)\chi(\Delta_s)-c-d,
\end{eqnarray*}
where the second equality follows from the fact each cusp has $m-3$ regular and $1$ singular inverse images and each node has $m-4$ regular and $1$ singular inverse images.

Next, we apply the formula \ref{RH} to each map
$f_s:X_s\to B_\epsilon.$ In this case, the branch locus is $\Delta_s$.
We have
\begin{equation*}
\chi(X_s)=m\chi(B_\epsilon)+\chi(f_s^{-1}(\Delta_s))-m\chi(\Delta_s).
\end{equation*}
Hence,
\begin{eqnarray*}
\mu(X,0)+1&=&\chi(X_s)\\
&=&m+((m-1)\chi(\Delta_s)-c-d)-m\chi(\Delta_s)\\
&=&m-\chi(\Delta_s)-c-d.
%&=&m(f)-1+\mu(\Delta(f))-3c(f)-2d(f)\\
%&=&m(f)-1+(\mu(S)+2c(f)+2d(f))-3c(f)-2d(f)\\
%&=&m(f)-1+\mu(S)-c(f).
\end{eqnarray*}

Moreover, since $\Delta$ is a plane curve, from \cite{BG},
\begin{eqnarray*}
\chi(\Delta_s)&=&1-\mu(\Delta)+\mu(\Delta_s)\\
&=&1-\mu(\Delta)+2c+d\\
&=&1-\mu(S)-d,
\end{eqnarray*}
where $\mu(\Delta_s)$ is global Milnor number (that is, the sum of all the Milnor number of the singular point of $\Delta_s$).
Therefore,
\begin{eqnarray*}
\mu(X,0)+1&=&m-(1-\mu(S)-d)-c-d\\
&=&m-1+\mu(S)-c,
\end{eqnarray*}
or equivalently,
\begin{equation*}
c+\mu(X,0)=\mu(S)+m-2.
\end{equation*}
\end{proof}

We recall that the discriminant Milnor number defined by Mond and Montaldi (\cite{MM}), $\mu_\Delta(f)$ is, by definition, equal to $1- \chi(\Delta_s)$. Therefore, as a corollary of the proof of the previous theorem, we can relate this number to the other invariants.

\begin{cor}\label{mudelta}
If $(X,0)$ is a $2$-dimensional ICIS and $f:(X,0)\to(\C^2,0)$ is $\mathscr A$-finite, then
\begin{eqnarray*}
\mu_\Delta(f)=&\mu(\Delta)-2c-d\\=&d+\mu(S).
\end{eqnarray*}
\end{cor}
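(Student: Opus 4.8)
The plan is to read both identities directly off the computation of $\chi(\Delta_s)$ already performed inside the proof of Theorem \ref{2}, using only the stated definition $\mu_\Delta(f)=1-\chi(\Delta_s)$. Since all the relevant geometry has been carried out there, essentially no new argument is required; the corollary is a repackaging of intermediate steps.

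First I would recall the two chains of equalities for $\chi(\Delta_s)$ obtained in that proof. On the one hand, because $\Delta$ is a plane curve, the Buchweitz--Greuel formula gives $\chi(\Delta_s)=1-\mu(\Delta)+\mu(\Delta_s)$; counting the Milnor numbers of the singular points surviving in $\Delta_s$ --- each of the $c$ cusps contributing $2$ and each of the $d$ nodes contributing $1$ --- yields $\mu(\Delta_s)=2c+d$, and hence
$$
\chi(\Delta_s)=1-\mu(\Delta)+2c+d.
$$
On the other hand, inserting the relation $\mu(\Delta)=\mu(S)+2c+2d$ from Theorem \ref{1} collapses this to $\chi(\Delta_s)=1-\mu(S)-d$.

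It then remains only to substitute each expression into $\mu_\Delta(f)=1-\chi(\Delta_s)$. The first gives
$$
\mu_\Delta(f)=1-\left(1-\mu(\Delta)+2c+d\right)=\mu(\Delta)-2c-d,
$$
and the second gives
$$
\mu_\Delta(f)=1-\left(1-\mu(S)-d\right)=d+\mu(S),
$$
which are precisely the two claimed formulas.

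The main thing to watch is not a genuine obstacle but careful bookkeeping: one must use the correct Milnor-number weights ($2$ per cusp and $1$ per node), as opposed to the $\delta$-invariant weights of $1$ each that were used in the proof of Theorem \ref{1}, and one must apply the substitution from Theorem \ref{1} consistently. With those conventions in place, the corollary follows without further effort.
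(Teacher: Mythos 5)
Your proposal is correct and follows exactly the paper's route: the paper also derives both identities by combining the definition $\mu_\Delta(f)=1-\chi(\Delta_s)$ with the chain of equalities $\chi(\Delta_s)=1-\mu(\Delta)+2c+d=1-\mu(S)-d$ already established in the proof of Theorem \ref{2}. The bookkeeping (Milnor-number weights $2$ per cusp, $1$ per node, and the substitution from Theorem \ref{1}) matches the paper's computation precisely.
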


We use the ideas of \cite[Lemma 3.2]{NT} to prove the following result.

\begin{lem}\label{32}
Let $(X,0)$ be an ICIS of dimension $2$ and let $f:(X,0)\to(\C^2,0)$ be an $\mathscr A$-finite map germ, then 
\begin{equation*}
(m-1)\mu(\Delta)=\mu(f^{-1}(\Delta))+m-2,
\end{equation*}
where $f$ has degree $m$.
\end{lem}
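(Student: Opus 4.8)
The plan is to mimic the proof of Theorem \ref{2}: I would fix a stabilization $F:(\mathcal X,0)\to(\C\times\C^2,0)$ of $f$ with a representative as in Definition \ref{defi-cd}, and then extract the identity by comparing the Euler characteristics of the two curves $\Delta_s$ and $f_s^{-1}(\Delta_s)$ via the Riemann--Hurwitz formula (\ref{RH}), combined with the conservation of the Milnor number along the flat deformations $\Delta(F)$ and $F^{-1}(\Delta(F))$ of $\Delta$ and $f^{-1}(\Delta)$.

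First I would record the two Euler-characteristic identities coming from the fact that $\Delta(F)$ and $F^{-1}(\Delta(F))$ are flat deformations of the reduced complete intersection curves $\Delta$ and $f^{-1}(\Delta)$. Since the Milnor fibre of a reduced ICIS curve is connected, a partial smoothing $C_s$ of such a germ $C$ satisfies $\chi(C_s)=1-\mu(C)+\mu(C_s)$, where $\mu(C_s)$ denotes the sum of the Milnor numbers at the singular points of $C_s$. Applied to the two families this gives
\begin{equation*}
\chi(\Delta_s)=1-\mu(\Delta)+\mu(\Delta_s),\qquad
\chi(f_s^{-1}(\Delta_s))=1-\mu(f^{-1}(\Delta))+\mu(f_s^{-1}(\Delta_s)),
\end{equation*}
and, exactly as in Theorem \ref{2}, $\mu(\Delta_s)=2c+d$, since the $c$ cusps of $\Delta_s$ are ordinary cusps ($\mu=2$) and the $d$ double folds are nodes ($\mu=1$). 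I would also reuse the intermediate computation from the proof of Theorem \ref{2}: applying (\ref{RH}) to the branched cover $f_s:f_s^{-1}(\Delta_s)\to\Delta_s$ of degree $m-1$ yields $\chi(f_s^{-1}(\Delta_s))=(m-1)\chi(\Delta_s)-c-d$.

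The heart of the argument, and the step I expect to be the main obstacle, is the local computation of $\mu(f_s^{-1}(\Delta_s))$, that is, the determination of the singularity type of $f_s^{-1}(\Delta_s)$ over each singular point of $\Delta_s$ together with the bookkeeping of the remaining regular preimages. Using the Whitney normal forms I would check the following. Over a fold point the reduced preimage of the (smooth) local branch of $\Delta_s$ is the smooth critical curve, so folds away from cusps and nodes contribute nothing. Over a cusp point, $f^{-1}(\Delta)$ is the union of the critical curve and a second smooth branch tangent to it with intersection multiplicity two (an $A_3$ singularity, $\mu=3$), while each of the remaining $m-3$ regular preimages of the cusp carries a copy of the ordinary cusp of $\Delta_s$ ($\mu=2$). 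Over each of the two fold points of a double fold, the reduced preimage of the node of $\Delta_s$ is again a node ($\mu=1$), and each of the remaining $m-4$ regular preimages carries a copy of that node. Summing, each cusp contributes $3+2(m-3)=2m-3$ and each double fold contributes $2+(m-4)=m-2$, so $\mu(f_s^{-1}(\Delta_s))=(2m-3)c+(m-2)d$.

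It then remains to combine the four relations. Substituting $\chi(\Delta_s)=1-\mu(\Delta)+2c+d$ and the value of $\mu(f_s^{-1}(\Delta_s))$ into the two expressions for $\chi(f_s^{-1}(\Delta_s))$ and equating them, the terms $(2m-3)c+(m-2)d$ cancel and one is left with $1-\mu(f^{-1}(\Delta))=(m-1)-(m-1)\mu(\Delta)$, which is precisely $(m-1)\mu(\Delta)=\mu(f^{-1}(\Delta))+m-2$. The only delicate points are justifying the conservation-of-Milnor-number identity for the non-planar complete intersection family $F^{-1}(\Delta(F))$ (where I would invoke the Buchweitz--Greuel theory together with the Cohen--Macaulayness already noted for these deformations) and verifying the local normal-form computations, in particular that the two branches of $f^{-1}(\Delta)$ at a cusp meet with intersection multiplicity exactly two.
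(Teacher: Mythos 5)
Your proposal is correct and follows essentially the same route as the paper's own proof: conservation of the Milnor number for the flat deformations $\Delta(F)$ and $F^{-1}(\Delta(F))$, the Riemann--Hurwitz identity $\chi(f_s^{-1}(\Delta_s))=(m-1)\chi(\Delta_s)-c-d$ recycled from Theorem \ref{2}, the local count $\mu(\Delta_s)=2c+d$ and $\mu(f_s^{-1}(\Delta_s))=(2m-3)c+(m-2)d$ (with the $A_3$ point over each cusp and nodes elsewhere), and the final elimination. The only cosmetic difference is that you equate two expressions for $\chi(f_s^{-1}(\Delta_s))$ while the paper solves for $\mu(f^{-1}(\Delta))$ directly; the computation is the same.
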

\begin{proof}
We have
\begin{equation*}
\mu(\Delta)-\mu(\Delta_s)=1-\chi(\Delta_s)
\end{equation*}
and
\begin{equation*}
\mu(f^{-1}(\Delta))-\mu(f_s^{-1}(\Delta_s))=1-\chi(f_s^{-1}(\Delta_s)).
\end{equation*}

From the proof of Theorem \ref{2}, we know that $\chi(f_s^{-1}(\Delta_s)=(m-1)\chi(\Delta_s)-(c+d)$.
%We denote by $\Sigma_s$ the set of cusps and transverse double folds of $f_s$ and $\Delta(f_s)^0=\Delta(f_s)-\Sigma_s$. We have, then,
%\begin{equation*}
%\chi(\Delta(f_s))=\chi(\Delta(f_s)^0)+c(f)+d(f)
%\end{equation*}
%and, since each cusp or transverse double fold has $m(f)-2$ inverse image, we have
%\begin{equation*}
%\chi(f_s^{-1}(\Delta(f_s))=\chi(f_s^{-1}(\Delta(f_s)^0))+(m(f)-2)(c(f)+d(f))
%\end{equation*}
%
%Furthermore, $f_s:f^{-1}(\Delta(f_s)^0)\to\Delta(f_s)^0$ is a covering of $m(f)-1$ sheets, therefore,
%\begin{equation*}
%\chi(f_s^{-1}(\Delta(f_s)^0))=(m(f)-1)\chi(\Delta(f_s)^0)
%\end{equation*}
%and then
%\begin{eqnarray*}
%\chi(f_s^{-1}(\Delta(f_s)))&=&(m(f)-1)(\chi(\Delta(f_s))-c(f)-d(f))+(m(f)-2)(c(f)+d(f))\\
%&=&(m(f)-1)(\chi(\Delta(f_s)))-(c(f)+d(f))
%\end{eqnarray*}
On the other hand, since the singularities of $\Delta_s$ are cusps and nodes (corresponding to the cusps and transverse double-fold of $f_s$, respectively) and the Milnor number of these singularities are $2$ and $1$, respectively, we have
\begin{equation*}
\mu(\Delta_s)=2c+d.
\end{equation*}
Moreover, the singularities of $f_s^{-1}(\Delta_s)$ can be either a simple cusp, if it is a regular inverse image of a cusp, or a singularity of type $A_3$ if it is a singular inverse image of a cusp, or a node, if it is an inverse image of a transverse double point (both regular or singular). The Milnor numbers of these type of singularities are $2$, $3$ or $1$ respectively and each cusp has $m - 3$ regular inverse images and $1$ singular inverse image, and each transverse double point has $m - 2$ inverse images, therefore
\begin{equation*}
\mu(f_s^{-1}(\Delta_s))=2(m-3)c+3c+(m-2)d.
\end{equation*}
And then,
\begin{eqnarray*}
\mu(f^{-1}(\Delta))&=&\mu(f_s^{-1}(\Delta_s))+1-\chi(f_s^{-1}(\Delta_s))\\
&=&2(m-3)c+3c+(m-2)d+1-\\&&((m-1)(\chi(\Delta_s))-(c+d))\\
&=&2(m-3)c+3c+(m-2)d+1-\\&&((m-1)(\mu(\Delta_s)-\mu(\Delta)+1)-(c+d))\\
&=&2(m-3)c+3c+(m-2)d+1-\\&&((m-1)((2c+d)-\mu(\Delta)+1)-(c+d))\\
&=&(m-1)\mu(\Delta)+2-m
\end{eqnarray*}
Therefore,
\begin{equation*}
(m-1)\mu(\Delta)=\mu(f^{-1}(\Delta))+m-2.
\end{equation*}

\end{proof}

Finally, we also will need the following lemma, whose proof is inspired in the proof of \cite[Lemma 2.3]{MNP}.

\begin{lem}\label{deltasmooth} Let $(X,0)$ be an ICIS and let $f:(X,0)\to(\C^2,0)$ be a map germ with finite singularity type. Let $y\in\Delta$ such that $\Delta$ is smooth at $y$, then $ f^{-1}(y)\cap S$ consists of a single point. Furthermore, if   $ f^{-1}(y)\cap S=\{x\}$, then $X$ is smooth at $x$ and $f$ has fold type at $x$.
\end{lem}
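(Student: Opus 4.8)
The plan is to extract everything from the $0$th Fitting ideal defining $\Delta$. Since $\Delta$ is smooth at $y$, I would choose coordinates $(u,v)$ on $(\C^2,y)$ so that the principal ideal $\mathcal F_0$ generating $\Delta$ is exactly $(v)$. Writing $B:=(f|_S)_*\mathscr O_S$ for the direct image, considered as a finite $\mathscr O_2$-module supported at $y$, the first structural observation is the standard inclusion $\mathcal F_0(B)\subseteq\operatorname{Ann}(B)$. Hence $v\cdot B=0$, so $B$ is in fact a finite module over $\C\{u\}\cong\mathscr O_2/(v)$, with $u$ acting as $\tilde u:=u\circ f|_S$.

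The heart of the argument is to read off $\operatorname{mult}_y(\Delta)$ from $B$. Because $S$ is cut out inside $(\C^n,0)$ by the $n-1$ functions $\langle\phi\rangle$ and $\delta:=\det J(\phi,f)$ and has dimension $1$, it is a complete intersection curve, hence Cohen--Macaulay; moreover $\tilde u$ is a nonzerodivisor on $\mathscr O_S$, since its zero locus on $S$ is the finite fibre $(f|_S)^{-1}(y)$ (indeed $\{u=0\}$ meets the smooth curve $\Delta=\{v=0\}$ only at $y$). Therefore $B$ is torsion free, hence free, over the discrete valuation ring $\C\{u\}$, of some rank $r$. As an $\mathscr O_2$-module $B\cong(\mathscr O_2/(v))^r$ admits the presentation $\mathscr O_2^r\xrightarrow{\,v\cdot\operatorname{Id}_r\,}\mathscr O_2^r\to B\to 0$, so $\mathcal F_0(B)=(v^r)$; comparing with $\mathcal F_0=(v)$ forces $r=1$. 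Since $r=\dim_\C B/\tilde u B=\sum_{x_i\in f^{-1}(y)\cap S}\dim_\C\mathscr O_{S,x_i}/(\tilde u)$ and each summand is at least $1$, the fibre $f^{-1}(y)\cap S$ reduces to a single point $x$ with $\dim_\C\mathscr O_{S,x}/(\tilde u)=1$. This says $\tilde u$ generates $\mathfrak m_{S,x}$, so $S$ is smooth at $x$ and $f|_S$ is an immersion onto $\Delta$, which is the first assertion.

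Next I would deduce that $X$ is smooth at $x$ from the smoothness of $S$ by an embedding-dimension estimate, which I expect to be the slickest step. As a scheme $\mathscr O_{S,x}=\mathscr O_{X,x}/(\delta)$ with $\delta\in\mathfrak m_{X,x}$, because $x\in S$. Consequently $\mathfrak m_{S,x}/\mathfrak m_{S,x}^2=\mathfrak m_{X,x}/(\mathfrak m_{X,x}^2+(\delta))$ has dimension at least $\operatorname{embdim}(X,x)-1$. Since $S$ is smooth of dimension $1$ at $x$ we have $\operatorname{embdim}(S,x)=1$, whence $\operatorname{embdim}(X,x)\le 2$ and $X$ is smooth at $x$.

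Finally, with $X$ smooth I may take coordinates $(a,b)$ on $(X,x)\cong(\C^2,0)$ with $S=\{b=0\}$. As $v\circ f$ vanishes on $S$ I write $f=(f_1,b\,h)$, and since $f|_S$ is an immersion onto $\{v=0\}$ the component $f_1$ is a submersion. The crucial point is that $S=\{\det Jf=0\}$ is reduced, being smooth: expanding $\det Jf$ along $\{b=0\}$ forces $h=b\,h_1$ with $h_1(x)\ne 0$, so $f=(f_1,b^2\cdot\text{unit})$, which is $\mathscr A$-equivalent to the fold $(a,b^2)$. The main obstacle is precisely the Fitting-ideal computation of the second paragraph: identifying $\operatorname{mult}_y(\Delta)$ with $\dim_\C\mathscr O_{S,x}/(\tilde u)$ relies on the Cohen--Macaulayness of $S$ together with the freeness of $B$ over $\C\{u\}$, after which the single-point statement, the smoothness of $X$, and the fold normal form all follow formally.
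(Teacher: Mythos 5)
Your proof is correct, and while it rests on the same pivot as the paper's proof --- smoothness of $\Delta$ with its Fitting-ideal structure means $\mathcal F_0(f|_S)_y$ is generated by a coordinate $v\notin\mathcal M_{V,y}^2$ --- the mechanism is genuinely different. The paper argues in two steps, both by citation: first, multiplicativity of the $0$th Fitting ideal over the fibre, $\mathcal F_0(f|_S)_y=F_0((f|_S)_{x_1})\cdots F_0((f|_S)_{x_r})$ with each factor in $\mathcal M_{V,y}$, forces $r=1$; second, the Mond--Pellikaan square minimal presentation $\lambda$ of size $q=\dim_\C\mathscr O_{S,x}/f^*\mathcal M_{V,y}$, whose determinant generates $\mathcal F_0$ and lies in $\mathcal M_{V,y}^q$, forces $q=1$, whence $\mathscr O_{S,x}\cong\mathscr O_{2,y}/\mathcal F_0(f|_S)_y$ is regular and $f|_S$ is an immersion. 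You obtain both conclusions in one stroke and without presentation theory: Cohen--Macaulayness of the complete intersection curve $S$ makes $B=(f|_S)_*\mathscr O_S$ free of some rank $r$ over the discrete valuation ring $\C\{u\}$, the resulting presentation by $v\cdot\operatorname{Id}_r$ gives $\mathcal F_0(B)=(v^r)$, and $(v^r)=(v)$ forces $r=1$; the identity $r=\sum_i\dim_\C\mathscr O_{S,x_i}/(\tilde u)$ then yields simultaneously that the fibre is one point $x$ and that $\mathcal M_{S,x}=(\tilde u)$. In effect you reprove exactly the special case of the Mond--Pellikaan theorem that is needed, which makes the argument self-contained, whereas the paper's route is shorter on the page but outsources more. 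You also supply the two steps the paper merely asserts at the end: the embedding-dimension estimate $\operatorname{embdim}(X,x)\le\operatorname{embdim}(S,x)+1=2$, which gives smoothness of $X$, and the explicit fold normal form, where your appeal to reducedness of $(\det J(f|_X))=(b)$ is precisely the right point. The only fact you use tacitly is the standard identification, at a smooth point of $X$, of the restriction of the ideal $\left\langle\phi\right\rangle+(\det J(\phi,f))$ with $(\det J(f|_X))$ in local coordinates on $X$ up to a unit; it deserves a sentence but is harmless.
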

\begin{proof}
We take representatives $S$ of $(S,0)$ and $V$ of $(\C^2,0)$ such that $f|_{S}:S\to V$ is a finite morphism of complex spaces. Denote by $\mathcal F_0(f|_{S})$ the 0th Fitting ideal sheaf of $f|_S$. We assume that $v(\mathcal F_0(f|_{S}))$ is smooth at $y$ and we write $f|_{S}^{-1}(y)=\{x_1,\dots,x_r\}$. Then, the stalk at $y$ is the product
\begin{equation*}
\mathcal F_0({f|_{S})}_y=F_0({f|_{S}}_{x_1})\dots F_0({f|_{S}}_{x_r}),
\end{equation*}
where ${f|_{S}}_{x_i}:(S,x_i)\to (V,y)$ is the germ of $f|_{S}$ at $x_i$. Since each $F_0({f|_{S}}_{x_i})\in \mathcal M _{V,y}$ (the maximal ideal of $\mathscr O_{V,y}$), we must have $r=1$.

We write $f|_{S}^{-1}(y)=\{x\}$. Let $q=\dim_\C\mathscr O_{S,x}/f^*\mathcal M_{V,y}$. By \cite{MP}, a minimal presentation of $\mathscr O_{S,x}$ has the form
$$
\begin{CD}
\mathscr O_{V,y}^q@>\lambda>>\mathscr O_{V,y}^q @>\alpha>> \mathscr O_{S,x}@>>> 0.
\end{CD}
$$
Then ${\mathcal F_0(f|_{S})}_y$ is generated by $\det(\lambda)\in \mathcal M_{V,y}^q$ so that $q$ is necessarily equal to $1$. The exactness of the sequence implies $F_0(f|_{S})_y=Im(\lambda)=Ker(\alpha)$ and thus $\mathscr O_{S,x}$ is isomorphic to $\mathscr O_{2,y}/\mathcal F_0(f|_{S})_y$, which is regular. On the other hand, since $q=1$ we have $f^*(\mathcal M_{V,y})=\mathcal M_{S,x}$ and $f|_{S}$ is regular. But this is implies that $X$ is smooth at $x$ and $f$ has fold type at $x$.

%Therefore, in order to show that $f$ is fold type at $x$, we need to show that $X$ is regular at $x$. We have,
%\begin{equation*}
%S=v(\left\langle \phi_1,\dots,\phi_p\right\rangle+I_{p+2}(J(f_1,f_2,\phi_1,\dots,\phi_p))).
%\end{equation*}

%We write $g_1,\dots,g_r$ the generators of $I_{p+2}(J(f_1,f_2,\phi_1,\dots,\phi_p))$. %Let $Y=v(g_1,\dots,g_r)$. We have
%$$\codim Y\leq(p+2-(p+2)+1)(n-(p+2)+1=n-p-1,$$
%therefore $\rank d(g_1,\dots,g_r)(x)\leq \codim Y\leq n-p-1$ and $n-1=\rank d(\phi,g_1,\dots,g_r)(x)\leq \rank d\phi(x)+\rank d(g_1,\dots,g_r)(x)\leq \rank d\phi(x)+n-p-1$.
%Hence, $\rank d\phi(x)\geq p$, that is, $X$ is not singular at $x$.

%Hence from the proof of \cite[Lemma 2.3]{MNP}, $\#(f^{-1}(y))=1$ and if $f^{-1}(y)=\{x\}$, then $S$ is smooth in $x$ and $f|_{S}$ is not singular in $x$. Therefore, $f$ is a fold type germ on $x$. 
\end{proof}

%\begin{cor} Let $(X,0)\subset(\C^n,0)$ be an ICIS and let $f:(X,0)\to(\C^2,0)$ be a map germ  with finite singularity type. The following sentences are equivalent.
%\begin{enumerate}
 % \item $f$ is $\mathscr A$-finite.
  %\item $\mu(\Delta(f),0)<\infty$.
  %\item $c(f)<\infty$ and $d(f)<\infty$.
%\end{enumerate}
%\end{cor}
%\begin{proof}
%By the the lemma \ref{reduzido}, the claim (1) is equivalent to the claim (2).

%$(2)\Leftrightarrow(3)$ follows from the theorems \ref{1} e \ref{2}.

%$(2)\Rightarrow (3)$: If $\mu(\Delta(f))<\infty$, then  $c(f)<\infty$ and $d(f)<\infty$ by the Theorem \ref{1}.

%$(3)\Rightarrow (2)$: If $c(f)<\infty$, by the Theorem \ref{2}, $\mu(S)<\infty$. Hence, by the Theorem \ref{1}, $\mu(\Delta(f))<\infty$.

%$(2)\Rightarrow (1)$: From the Lemma \ref{deltasmooth}, the only point of $S$ which is not a fold type is $\{0\}$. Therefore, $\mathscr A_e-\codim(X,f)<\infty$.

%$(1)\Rightarrow (2)$: If $\mathscr A_e-\codim(X,f)<\infty$, then there exists a neighbourhood of the origin in $\Delta(f)$ where all the points except $0$ are cusps or double folds. Since these types of singularities are isolated, we have that $\Delta(f)$ has an isolated singularity at the origin and, hence, $\mu(\Delta(f),0)<\infty$.
%\end{proof}

\section{Equisingularity of maps}

In this section, we consider 1-parameter families of maps $f_t:(X_t,0)\to(\C^2,0)$. We want to characterize the Whitney equisingularity of the family in terms of the constancy of the invariants. We first recall the basic definitions about equisingularity of varieties and of maps.

Let $(X,0)\subset(\C^n,0)$ be an ICIS and let $\pi:(\mathcal X,0)\to (\C,0)$ be a 1-parameter flat deformation, where $(\mathcal X,0)\subset(\C\times\C^n,0)$ and $\pi(t,x)=t$. We assume that $0\in X_t$ for each $t$, hence we can consider the family of ICIS $\{(X_t,0)\}$. 

\begin{defi}\label{variedadewhitney} With the above notation:
\begin{enumerate}
  \item We say that $(\mathcal X,0)$ is \emph{good} if there is a representative $\mathcal X$ defined in $D\times U$, where $D$ and $U$ are open neighbourhoods of the origin in $\C$ and $\C^n$, respectively, such that $X_t \backslash \{0\}$ is smooth for any $t\in D$.
%  \item We say that $(\mathcal X,0)$ is \emph{$\mu$-constant} if $\mu(X_t,0)$ is constant.
  \item We say that $(\mathcal X,0)$ is \emph{topologically trivial} if there is a homeomorphism germ $\Phi:(\mathcal X,0)\to(\C\times X,0)$ such that $\pi\circ\Phi=\pi$.
  \item We say that $(\mathcal X,0)$ is \emph{Whitney equisingular} if it is good and the pair $(\mathcal X\setminus T,T)$ satisfy the Whitney conditions, where $T=D\times\{0\}$.
\end{enumerate}
\end{defi}

The condition for the family to be good means that the ICIS $X_t$ have an isolated singularity ``uniformly'', that is, in a neighbourhood $U$ which does not depend on $t$. A topologically trivial family is not good in general. Moreover, according to the first isotopy lemma of Thom, any Whitney equisingular family is topologically trivial, but the converse is not true in general. 

One of the problems in equisingularity theory is to find numerical invariants, the constancy of which in the family implies the Whitney equisingularity. In the case of families of ICIS $(\mathcal X,0)$ the answer to this problem is given in \cite{G}: a family of ICIS of dimension $d$, $(\mathcal X,0)$, is Whitney equisingular if and only if the polar multiplicities $m_0(X_t,0), \dots, m_{d}(X_t,0)$ are constant. In particular,  if $d=2$, we have the following relation:
$$
m_0(X_t,0)+m_2(X_t,0)=\mu(X_t,0)+m_1(X_t,0)+1.
$$
Since all the invariants in this formula are upper semicontinuous, we deduce that $(\mathcal X,0)$ is Whitney equisingular if and only if $\mu(X_t,0)$ and $m_1(X_t,0)$ are constant. 

\medskip
Let $f:(X,0)\to (\C^2,0)$ be an $\mathscr A$-finite map germ on a 2-dimensional ICIS $(X,0)$. We consider a 1-parameter unfolding $F:(\mathcal X,0)\to (\C\times\C^2,0)$ such that $(\mathcal X,0)$ is a deformation of $(X,0)$ as above and $F$ is defined by $F(t,x)=(t,f_t(x))$. We assume that $F$ is origin preserving, that is, $f_t(0)=0$ for any $t$. Thus, we have a family of function germs $\{f_t:(X_t,0)\to(\C^2,0)\}$.

\begin{defi} 
We say that $F$ is \emph{good} if there exists a representative $F:\mathcal X\to D\times V$, where $D$ is an open neighbourhood of the origin in $\C$ and $V=B_\epsilon$ is a ball of radius $\epsilon$ in $\C^2$,  such that $X_t \backslash \{0\}$ is smooth, $f_t^{-1}(0)=\{0\}$ and $f_t$ is stable on $X_t\backslash\{0\}$ for any $t\in D$.

We say that $F$ is \emph{excellent} if it is good and in addition, $f_t$ has no cusps nor double folds on $X_t\backslash\{0\}$, for any $t\in D$.
\end{defi}

If $F$ is excellent, then we can consider the stratification $(\mathcal A,\mathcal A')$ defined as follows:
\begin{align*}
\mathcal A&=\{\mathcal X-F^{-1}(\Delta(F)), F^{-1}(\Delta(F))-T,T\},\\
\mathcal A'&=\{D\times V-\Delta(F), \Delta(F)-T', T'\},
\end{align*}
where $T=D\times\{0\}\subset\mathcal X$
and $T'=D\times\{0\}\subset D\times V$.

\begin{defi}
We say that $F$ is \emph{Whitney equisingular} if it is excellent and the above stratification $(\mathcal A,\mathcal A')$ is regular (i.e., $\mathcal A$, $\mathcal A'$ are Whitney stratifications and we have also the Thom $A_f$ condition, see \cite{GW} for details).

We say that $F$ is \emph{topologically trivial} if there are homeomorphism map germs $\Phi:(\mathcal X,0)\to(\C\times X,0),$ and $\Psi:(\C\times\C^2,0)\to(\C\times\C^2,0)$ which are unfoldings of the identity maps and 
such that $F=\Psi\circ (\id\times f)\circ\Phi$.
\end{defi}

\begin{lem}\label{44}
Let $(X,0)$ be an ICIS of dimension $2$ and let $f:(X,0)\to (\C^2,0)$ be $\mathscr A$-finite. Let $F:(\mathcal X,0)\to(\C\times\C^2,0)$ be an unfolding of $f$. If $\mu(\Delta_t)$ is constant, then $F$ is excellent, where $\Delta_t$ denotes the discriminant of $f_t$.
\end{lem}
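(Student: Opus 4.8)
The plan is to reduce the statement to a single geometric fact about the discriminant, namely that constancy of $\mu(\Delta_t)$ forces $\Delta_t$ to be smooth away from the origin, and then to read off excellence from Lemma \ref{deltasmooth}. I fix a representative $F:\mathcal X\to D\times B_\epsilon$ of the unfolding as in the definition of good, and, using that $f$ is $\mathscr A$-finite, I may assume $f$ is finite and that $\Delta_0=\Delta$ is reduced with an isolated singularity at the origin, so $\mu(\Delta)<\infty$. Recall that $F$ is excellent exactly when, for each $t\in D$, the surface $X_t\setminus\{0\}$ is smooth, $f_t^{-1}(0)=\{0\}$, and $f_t$ is stable on $X_t\setminus\{0\}$ with no cusps nor double folds there. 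By Whitney's classification the only stable singularities are folds, cusps and double folds, and by Lemma \ref{deltasmooth} a point $y\in\Delta_t\setminus\{0\}$ at which $\Delta_t$ is smooth has a single preimage in $S_t$, which is a fold lying on the smooth part of $X_t$. Thus, if $\Delta_t$ is smooth on $B_\epsilon\setminus\{0\}$, then off the origin $f_t$ has only folds, which are stable, and no cusps or double folds; so the whole content is concentrated in the smoothness of $\Delta_t$ outside $0$.

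The engine is a conservation-of-Milnor-number argument. Since $\Delta(F)$ is a hypersurface in $D\times B_\epsilon$, I write it as $g^{-1}(0)$ for some $g:D\times B_\epsilon\to\C$, and set $g_t=g(t,\cdot)$, so that $\Delta_t=g_t^{-1}(0)$. For $t=0$ the function $g_0$ has an isolated critical point at $0$ with $\mu(g_0,0)=\mu(\Delta)$. After shrinking $D$, conservation of the Milnor number in the family $\{g_t\}$ gives that the critical points of $g_t$ lying in $B_\epsilon$ have total Milnor number equal to $\mu(\Delta)$. The singular points of $\Delta_t$ are precisely the critical points of $g_t$ on the zero level, so the total Milnor number $\mu_{\operatorname{tot}}(\Delta_t)=\sum_{y\in\operatorname{Sing}(\Delta_t)}\mu(\Delta_t,y)$ satisfies $\mu(\Delta_t,0)\le\mu_{\operatorname{tot}}(\Delta_t)\le\mu(\Delta)$. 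The hypothesis that $\mu(\Delta_t)=\mu(\Delta_t,0)$ is constant, hence equal to $\mu(\Delta)$, forces equalities throughout; in particular $\mu_{\operatorname{tot}}(\Delta_t)=\mu(\Delta_t,0)$, so no singular point of $\Delta_t$ escapes the origin and $\Delta_t$ is smooth on $B_\epsilon\setminus\{0\}$. As a by-product this shows $\Delta_t$ is reduced, so that "singular'' and "$\mu>0$'' agree.

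It remains to secure the two variety-theoretic conditions, that $X_t\setminus\{0\}$ is smooth and $f_t^{-1}(0)=\{0\}$, and this is where I expect the main difficulty, because a regular preimage of $0$ is invisible to $\Delta_t$. A singular point $x\neq0$ of $X_t$ lies in $S_t$, and if $f_t(x)\neq0$ the contrapositive of Lemma \ref{deltasmooth} makes $\Delta_t$ singular at $f_t(x)\neq0$, contradicting the previous paragraph; likewise a cusp or double fold off the origin would make $\Delta_t$ singular off the origin. The genuinely delicate points are those lying over $0\in\C^2$. To dispose of them I would run the same conservation argument for the complete intersection curve $f_t^{-1}(\Delta_t)$: the degree $m$ is constant in the family and $\mu(\Delta_t,0)$ is constant, so by Lemma \ref{32} the Milnor number of the germ $(f_t^{-1}(\Delta_t),0)$ is constant, and conservation then forbids $f_t^{-1}(\Delta_t)$ from acquiring singular points away from the origin. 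Since a regular preimage $x\neq0$ of $0$ carries, via the local biholomorphism $f_t$ near $x$, an exact copy of the singular germ $(\Delta_t,0)$ inside $f_t^{-1}(\Delta_t)$ (here one uses that the genuine instability forces $\mu(\Delta)>0$), such a point is excluded; the same argument, together with the relation of Theorem \ref{1} controlling $\mu(S_t,0)$, excludes singular points of $X_t$ lying over $0$. Coordinating these conservation laws for the three germs at the origin is the crux; once done, $F$ is good and, by the first paragraph, excellent.
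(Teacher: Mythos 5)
Your first two paragraphs reproduce the paper's own proof. The paper obtains the key step --- smoothness of $\Delta_t$ away from the origin --- by citing the non-splitting theorem of \cite{NT} for $\mu$-constant families of plane curves, whereas you prove that step directly by conservation of the Milnor number of a defining equation $g_t$ of $\Delta_t$; your argument is correct and is essentially the proof of the cited result. The passage from there, via Lemma \ref{deltasmooth}, to stability and absence of cusps and double folds off the origin is also exactly the paper's. Indeed, the paper's proof consists of precisely these two steps and says nothing more; the conditions you single out in your last paragraph ($f_t^{-1}(0)=\{0\}$ and smoothness of $X_t\setminus\{0\}$ at points lying over $0\in\C^2$) are not discussed there, so you have identified a legitimate subtlety.

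The problem is that your third paragraph --- the one you yourself call the crux --- is circular. Its pivot is the claim that ``the degree $m$ is constant in the family''. If by this you mean the local degree $m_t$ of the germ of $f_t$ at $0$, then conservation of multiplicity gives $m_t+\sum_{x\in f_t^{-1}(0)\setminus\{0\}}\deg_x f_t=m$, so constancy of $m_t$ is \emph{equivalent} to the statement $f_t^{-1}(0)=\{0\}$ that you are trying to prove. If instead you mean the degree of the representative $f_t:X_t\to B_\epsilon$ over a generic value, that is indeed constant, but it is useless in Lemma \ref{32}, which is a statement about the germ of $f_t$ at $0$ and involves its local degree. A second assumption of the same circular kind is hidden in the same sentence: the discriminant of the germ of $f_t$ at $0$ consists only of those branches of $(\Delta_t,0)$ coming from points of $S_t$ near $0$, so to equate its Milnor number with the hypothesis-controlled quantity $\mu(\Delta_t)$ you must already know that no point of $S_t\setminus\{0\}$ maps to $0$ --- again part of the conclusion. (There is also the minor loose end $\mu(\Delta)=0$, i.e. $f$ a fold, where your ``regular preimage carries a copy of the singular germ $(\Delta_t,0)$'' device says nothing.) A genuine argument for this part cannot postulate constancy of the local degree; it would have to work with semicontinuity inequalities --- for instance for $\mu(S_t,0)$, $c_t$, $d_t$ and $m_t$, played against the relations of Theorems \ref{1} and \ref{2} and a Buchweitz--Greuel type semicontinuity for the curve family $f_t^{-1}(\Delta_t)$ --- and derive a contradiction from the existence of any extra point over $0$.
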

\begin{proof}
Since $\mu(\Delta_t)$ is constant, we have that $\Delta_t$ is good (see \cite{NT}). Therefore $\Delta_t-\{0\}$ is smooth and, by Lemma \ref{deltasmooth}, $f_t$ is stable and has not cusps nor double folds on $X_t-\{0\}$.
\end{proof}

\begin{prop}\label{54} Let $(X,0)\subset(\C^n,0)$ be an ICIS of dimension $2$ and let $f:(X,0)\to(\C^2,0)$ be an $\mathscr A$-finite map germ. Let $F:(\mathcal X,0)\to(\C\times\C^2)$ be an unfolding of $f$ such that $(\mathcal X,0)$ is a good topologically trivial unfolding of $(X,0)$. Then
$\mu(\Delta_t)$ is constant if and only if $c_t$ and $d_t$ are constant.
Moreover, if $F$ is topologically trivial, then $\mu(\Delta_t)$ is constant.
\end{prop}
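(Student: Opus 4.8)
The plan is to reduce both implications to the two formulas of Theorem \ref{1} and Theorem \ref{2}, applied fibrewise to each $f_t$, and then to separate $c_t$ and $d_t$ by a conservation-of-number (upper-semicontinuity) argument. Before doing anything else I would record two constancy facts that hold under the standing hypotheses. Since $f_0^{-1}(0)=\{0\}$, the unfolding $F$ is a finite map germ; as $(\mathcal X,0)$ is Cohen--Macaulay of dimension $3$ and the target $(\C\times\C^2,0)$ is smooth of dimension $3$, $F$ is flat, so $F_*\mathscr O_{\mathcal X}$ is locally free and the degree $m_t=\deg f_t$ is independent of $t$. Moreover, since $(\mathcal X,0)$ is a good topologically trivial family of $2$-dimensional ICIS, the germs $(X_t,0)$ are homeomorphic and $\mu(X_t,0)$, being a topological invariant of the Milnor fibre, is constant (cf. \cite{G,NT}). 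With $m$ and $\mu(X_t,0)$ constant, Theorem \ref{2} shows that $c_t-\mu(S_t)$ is independent of $t$, so $c_t$ and $\mu(S_t)$ vary together, while Theorem \ref{1} gives $\mu(\Delta_t)=\mu(S_t)+2(c_t+d_t)$.

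The forward implication is then immediate: if $c_t$ and $d_t$ are constant, then $\mu(S_t)=c_t-\mathrm{const}$ is constant by Theorem \ref{2}, whence $\mu(\Delta_t)=\mu(S_t)+2(c_t+d_t)$ is constant as well.

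For the converse, which is the hard direction, assume $\mu(\Delta_t)$ is constant. By Lemma \ref{44} the unfolding is then excellent, so each $f_t$ is $\mathscr A$-finite and all the invariants are defined at the origin. Combining the two relations I would rewrite the constant quantity as $\mu(\Delta_t)+\mathrm{const}=3c_t+2d_t=c_t+2(c_t+d_t)$. The crucial inputs are that $c_t$ and $c_t+d_t$ are each upper semicontinuous in $t$: the first from the colength formula of Proposition \ref{cusps}, where the cusp locus is the determinantal scheme cut out by the maximal minors and $c_t$ is its fibre colength, which can only drop for $t\neq0$; the second from the Fitting-ideal expression $c_t+d_t=\dim_\C\mathscr O_2/\mathcal F_1(f_t|_{S_t})$ in the remark following Theorem \ref{1}. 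Granting this, $c_t$ and $2(c_t+d_t)$ are nonnegative upper-semicontinuous integer functions whose sum $c_t+2(c_t+d_t)$ is constant, so a squeeze argument (if $a_t,b_t$ are upper semicontinuous and $a_t+b_t$ is constant, then both are constant) forces $c_t$ and $c_t+d_t$, hence also $d_t$, to be constant. I expect the main obstacle to be exactly the justification of these two semicontinuity statements: one must arrange the cusp scheme and the Fitting structure of $f_t|_{S_t}$ as coherent sheaves flat over the $t$-disc, so that the relevant colengths are genuine fibre dimensions and conservation of number applies.

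Finally, for the ``moreover'' part, suppose $F$ is topologically trivial via unfoldings $\Phi,\Psi$ of the identity, so that on each fibre $f_t=\psi_t\circ f\circ\varphi_t$ with $\varphi_t,\psi_t$ homeomorphisms. Then $\psi_t$ carries the discriminant $\Delta_0$ homeomorphically onto $\Delta_t$, and $\Psi$ topologically trivialises the family $\{\Delta_t\}$ of plane curve germs. Since the Milnor number of a plane curve is a topological invariant, $\mu(\Delta_t)$ is constant; this also feeds into the equivalence just established.
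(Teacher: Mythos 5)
Your overall strategy (Theorems \ref{1} and \ref{2} plus a semicontinuity squeeze, Lemma \ref{44} for excellence, and topological invariance of plane curve Milnor numbers for the ``moreover'' part) matches the paper's, but there is a genuine gap in your first preliminary ``constancy fact''. Finiteness and flatness of $F$ do \emph{not} give constancy of $m_t$. Local freeness of $F_*\mathscr O_{\mathcal X}$ gives constancy of the \emph{total} degree of $F$ over points near the origin, whereas the invariant $m_t$ in Theorem \ref{2} is the local degree of the germ $f_t:(X_t,0)\to(\C^2,0)$ at the origin; the two differ exactly when zeros of $f_t$ split away from the origin, i.e.\ when $f_t^{-1}(0)\neq\{0\}$. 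This is not excluded by the hypotheses: only the family of \emph{spaces} $(\mathcal X,0)$ is assumed good, not the unfolding $F$ itself (goodness of $F$, in particular $f_t^{-1}(0)=\{0\}$, is precisely what Lemma \ref{44} delivers, and only after one already knows $\mu(\Delta_t)$ is constant). A concrete counterexample to your claim: take $X_t=\C^2$ and $f_t(x,y)=(x,\,y^3-xy+ty)$. Then $(\mathcal X,0)=(\C\times\C^2,0)$ is trivially good and topologically trivial, $F$ is finite and flat of constant total degree $3$, but $f_t^{-1}(0)=\{(0,0),(0,\pm\sqrt{-t})\}$ and the germ of $f_t$ at the origin is regular, so $m_t$ drops from $3$ to $1$. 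Since you genuinely use ``$m_t$ constant'' in the forward implication ($c_t,d_t$ constant $\Rightarrow$ $\mu(\Delta_t)$ constant), where excellence is not yet available, that direction is broken as written.

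The repair is exactly the paper's argument and is small: by Theorem \ref{2}, $\mu(S_t)+m_t=c_t+\mu(X_t,0)+2$ is constant, and since $\mu(S_t)$ and $m_t$ are each upper semicontinuous (both can only drop for $t\neq0$), the squeeze you already formulated forces both to be constant; Theorem \ref{1} then gives constancy of $\mu(\Delta_t)$. Your converse direction, by contrast, can be salvaged because you invoke Lemma \ref{44} \emph{first}: excellence gives $f_t^{-1}(0)=\{0\}$, after which your flatness argument for $m_t$ becomes legitimate, and the squeeze on $c_t+2(c_t+d_t)$ with $c_t$ and $c_t+d_t$ upper semicontinuous goes through --- though the paper's route is more economical, squeezing $\mu(\Delta_t)=\mu(S_t)+2c_t+2d_t$ directly without ever needing $m_t$ or $\mu(X_t,0)$ in that direction. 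A further, smaller, defect: your justification that $\mu(X_t,0)$ is constant (``the germs $(X_t,0)$ are homeomorphic and $\mu$ is a topological invariant of the Milnor fibre'') is not a valid derivation, since homeomorphy of the germs says nothing directly about their Milnor fibres and topological invariance of $\mu$ for ICIS is not available in this generality; the correct statement is the one the paper uses, namely that a good topologically trivial family of ICIS has constant Milnor number, see \cite[Corollary 4.4]{NOT2}.
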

\begin{proof} If $\mu(\Delta_t)$ is constant, then by Theorem \ref{1}, $\mu(S_t)$, $c_t$, $d_t$ are also constant, since all these invariants are upper semicontinuous. Assume now that $c_t,d_t$ are constant. Since $(\mathcal X,0)$ is good and topologically trivial, $\mu(X_t,0)$ is constant (see \cite[Corollary 4.4]{NOT2}). Then,  Theorem \ref{2} implies that $\mu(S_t)$ and $m_t$ are also constant. Finally, again by Theorem \ref{1}, we have the constancy of $\mu(\Delta_t)$.

To see the second part, we observe that if $F$ is topologically trivial, the homeomorphism in the target preserves the discriminant $\Delta_t$. Thus, $\Delta_t$ is also topologically trivial and hence, $\mu(\Delta_t)$ is constant.
\end{proof}

Again, by using the appropriate version of Thom's second isotopy lemma for complex analytic maps, it follows that any Whitney equisingular unfolding is topologically trivial. In the following theorem, we exhibit necessary and sufficient conditions for an unfolding to be Whitney equisingular.

\begin{thm}\label{we}
Let $F:(\mathcal X,0)\to(\C\times\C^2,0)$ be an unfolding of an $\mathscr A$-finite germ $f:(X,0)\to(\C^2,0)$, where $(X,0)$ is an ICIS of dimension $2$. Then, $F$ is Whitney equisingular if and only if $\mu(X_t,0)$, $m_1(X_t,0)$, $\mu(\Delta_t)$ and $m_0(f_t^{-1}(\Delta_t))$ are constant.
\end{thm}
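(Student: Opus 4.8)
The plan is to invoke the polar multiplicity criterion for the Whitney equisingularity of map germs, in the form developed by Gaffney \cite{G} and applied by Gaffney and Mond \cite{GM} in the smooth case, and to adapt it to the present ICIS setting. Concretely, I would first argue that, once $F$ is known to be excellent, the regularity of the stratification $(\mathcal A,\mathcal A')$ is equivalent to the constancy in $t$ of the L\^e--Teissier polar multiplicities of all the strata appearing in $\mathcal A$ and $\mathcal A'$. Since the open strata and the parameter axes $T,T'$ contribute nothing, the relevant data reduce to the polar multiplicities of three germs of subvarieties: the surface $(X_t,0)$ (attached to the $0$-dimensional source stratum), the discriminant plane curve $\Delta_t$ (on the target side), and the preimage curve $f_t^{-1}(\Delta_t)$ (attached to the $1$-dimensional source stratum). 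The whole statement then amounts to showing that the constancy of these polar multiplicities is equivalent to the constancy of the four listed invariants.

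For the converse (and harder) direction, I would assume $\mu(X_t,0)$, $m_1(X_t,0)$, $\mu(\Delta_t)$ and $m_0(f_t^{-1}(\Delta_t))$ constant and build up the regularity of $(\mathcal A,\mathcal A')$. The family $(\mathcal X,0)$ is Whitney equisingular by the ICIS criterion recalled in Section 4: the relation $m_0(X_t,0)+m_2(X_t,0)=\mu(X_t,0)+m_1(X_t,0)+1$ together with upper semicontinuity forces all three surface polar multiplicities to be constant; in particular $(\mathcal X,0)$ is good and topologically trivial. Excellence of $F$ then follows from Lemma \ref{44}. For the discriminant, constancy of $\mu(\Delta_t)$ yields Whitney equisingularity of the plane-curve family $\Delta(F)$ (the classical $\mu$-constant criterion for plane curves), hence constancy of its polar multiplicities. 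For the preimage curve I would note that Proposition \ref{54} applies, so constancy of $\mu(\Delta_t)$ gives constancy of $c_t,d_t$ and, through its proof together with Theorem \ref{2}, of the degree $m_t$. Lemma \ref{32} then shows that $\mu(f_t^{-1}(\Delta_t))=(m_t-1)\mu(\Delta_t)-(m_t-2)$ is constant; combined with the hypothesis that $m_0(f_t^{-1}(\Delta_t))$ is constant, the curve case of the polar criterion gives constancy of the polar multiplicities of $f_t^{-1}(\Delta_t)$ as well. Feeding all of this into the map criterion yields that $(\mathcal A,\mathcal A')$ is regular, i.e. $F$ is Whitney equisingular.

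For the direct direction, I would use that Whitney equisingularity implies both the constancy of all polar multiplicities (by Teissier) and, via Thom's second isotopy lemma, the topological triviality of $F$. Constancy of the surface polar multiplicities delivers $\mu(X_t,0)$ and $m_1(X_t,0)$; constancy of the multiplicity of the $1$-dimensional source stratum delivers $m_0(f_t^{-1}(\Delta_t))$; and topological triviality of $F$ delivers constancy of $\mu(\Delta_t)$ by the second part of Proposition \ref{54}. This produces all four invariants.

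The step I expect to be the main obstacle is the precise formulation and justification of the polar multiplicity criterion for maps on an ICIS, that is, reducing the regularity of $(\mathcal A,\mathcal A')$ to the constancy of the finitely many polar multiplicities above, and in particular securing the Thom $A_f$ condition, which is not detected by the polar multiplicities of a single variety. A secondary delicate point is the curve case for $f_t^{-1}(\Delta_t)$: since this is in general a space curve embedded in the singular surface, one must verify that constancy of its multiplicity and Milnor number genuinely controls the Whitney conditions along the $1$-dimensional source stratum; here the one-to-one property giving $r(\Delta)=r(S)$ and the local fold structure of $f$ described in Lemma \ref{deltasmooth} should be used to pin down the equisingularity type.
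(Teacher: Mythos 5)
Your forward direction is essentially the paper's own: Whitney equisingularity of $F$ gives Whitney equisingularity of $(\mathcal X,0)$ (hence constancy of $\mu(X_t,0)$ and $m_1(X_t,0)$), the Whitney conditions along the pair $(F^{-1}(\Delta(F))-T,T)$ give constancy of $m_0(f_t^{-1}(\Delta_t))$, and topological triviality together with Proposition \ref{54} gives constancy of $\mu(\Delta_t)$. The genuine gap is in the converse, and it is exactly the step you flag as ``the main obstacle'': the polar multiplicity criterion you want to feed everything into does not exist in the setting of this theorem. Gaffney's criterion in \cite{G} --- that for an excellent unfolding the regularity of $(\mathcal A,\mathcal A')$ is equivalent to constancy of the polar invariants of the stable types --- is proved for finitely determined germs $f:(\C^n,0)\to(\C^p,0)$ with \emph{smooth} source; extending it to maps on an ICIS is essentially the content of the theorem you are asked to prove, so invoking it is circular. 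Moreover, as you yourself observe, Whitney equisingularity of the three families $\mathcal X$, $\Delta(F)$ and $F^{-1}(\Delta(F))$ \emph{separately} does not control the Whitney conditions between incident strata of $\mathcal A$ (e.g.\ between $\mathcal X-F^{-1}(\Delta(F))$ and $F^{-1}(\Delta(F))-T$) nor the Thom $A_f$ condition; these are conditions on the map, not on the individual varieties, so the reduction you propose is not a proof but a restatement of what must be shown.

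The paper closes precisely this hole by a direct assembly instead of a general criterion. After obtaining excellence from Lemma \ref{44} and Whitney equisingularity of $(\mathcal X,0)$ from the ICIS polar criterion, it notes that $m_0(\Delta_t)$ is constant (plane curve with constant $\mu$) and applies \cite[Lemma 2.8]{NT} to the family of map germs \emph{between curves} $f_t:f_t^{-1}(\Delta_t)\to\Delta_t$; that single result already yields regularity of the curve-level stratification $(\mathcal B,\mathcal B')$, i.e.\ the map-theoretic conditions along the one-dimensional strata --- so your detour through Lemma \ref{32}, the constancy of $m_t$, and the constancy of $\mu(f_t^{-1}(\Delta_t))$ is not needed. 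The remaining conditions are then verified by hand: the pair $(\mathcal X-F^{-1}(\Delta(F)),T)$ inherits the Whitney conditions from $(\mathcal X-T,T)$ because the first stratum is an open subset of $\mathcal X-T$; the conditions for $(\mathcal X-F^{-1}(\Delta(F)),F^{-1}(\Delta(F))-T)$ hold because $F$ is excellent, so off $T$ only fold points occur and the local structure is trivial along the fold curve; and the Thom $A_f$ condition holds because $F$ is regular on each stratum. If you want to salvage your outline, this is the substitute for the nonexistent ``map criterion'': prove (or quote) the statement for families of maps between curves and then check the incidence conditions directly.
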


\begin{proof}
If $F$ is Whitney equisingular, then $(\mathcal X,0)$ is Whitney equisingular and hence $\mu(X_t,0)$, $m_1(X_t,0)$ are constant. Moreover $F$ is topologically trivial, so $\mu(\Delta_t)$ is constant by Proposition \ref{54}. Finally, the pair $(F^{-1}(\Delta(F))-T,T)$ satisfy the Whitney conditions, which implies that $m_0(f_t^{-1}(\Delta_t))$ has to be also constant.
 
Conversely, assume now that $\mu(X_t,0)$, $m_1(X_t,0)$, $\mu(\Delta_t)$ and $m_0(f_t^{-1}(\Delta_t))$ are constant. By Lemma \ref{44}, $F$ is excellent and it makes sense to consider the stratification $(\mathcal A,\mathcal A')$.
Since $\Delta_t$ is a plane curve, we have that $m_0(\Delta_t)$ is also constant. Hence, $F:F^{-1}(\Delta_F)\to\Delta_F$ is Whitney equisingular by \cite[Lemma 2.8]{NT}. After shrinking the representatives if necessary, we get that the stratification 
$(\mathcal B,\mathcal B')$ is regular, where $\mathcal B= \{F^{-1}(\Delta(F))-T,T\}$ and $\mathcal B' = \{\Delta(F)-T', T'\}$.

On the other hand, $\mathcal X$ is Whitney equisingular, so the pair $(\mathcal X-T,T)$ satisfy the Whitney conditions. But $\mathcal X-F^{-1}(\Delta(F))\subset \mathcal X-T$, thus $(\mathcal X-F^{-1}(\Delta(F)),T)$ also satisfy the Whitney conditions. The Whitney conditions for the pair $(\mathcal X-F^{-1}(\Delta(F)),F^{-1}(\Delta(F))-T)$ follow from the fact that $F$ is excellent. Hence, $\mathcal A$ is a Whitney stratification.

Analogously, $\mathcal A'$ is also a Whitney stratification, since we are only adding the open stratum $D\times V-\Delta(F)$ to $\mathcal B'$. Furthermore, since $F$ is regular in each strata, $(\mathcal A,\mathcal A')$ satisfies the Thom $A_f$ condition. Therefore $(\mathcal A,\mathcal A')$ is regular.
\end{proof}

\section{Application: Zariski equisingularity of ICIS}

Let $(X,0)\subset (\C^n,0)$ be an ICIS of dimension $2$. We denote by $L(\C^n,\C^2)$ the set of the linear projections from $\C^n$ to $\C^2$ and consider the 
family of linear projections $G:X\times L(\C^n,\C^2)\to\C^2$ defined by $G(x,p)=p(x)$. For each $p\in L(\C^n,\C^2)$, we write $G_p(x)=p(x)$, that is, $G_p$ is the restriction $p|_X:(X,0)\to (\C^2,0)$.

By the results of \cite{T}, we have that $\mu(\Delta(G_p))$ does not depend on the choice of a generic linear projection $p\in L(\C^n,\C^2)$. Note that $\Delta(G_p)$ is nothing but the projection of the polar curve $S(G_p)$ of $(X,0)$. By Theorem \ref{1}, we have:
\begin{equation*}
\mu(\Delta(G_p))=\mu(S(G_p))+2c(G_p)+2d(G_p),
\end{equation*}
hence, $\mu(S(G_p))$, $c(G_p)$ and $d(G_p)$ do not depend on the generic linear projection $p$. 

\begin{defi} We define the \emph{number of cusps} and \emph{double folds} of $(X,0)$ as $c(X,0)=c(p|_X)$ and $d(X,0)=d(p|_X)$ respectively, where $p|_X:(X,0)\to(\C^2,0)$ is the restriction of a generic linear projection $p$ in the sense of Teissier \cite{T}.
\end{defi}

 %Therefore, we have the formulas $\mu(\Delta(p))-\mu(S(p))=2(c(p)+d(p))$ and $c(p)+\mu(X,0)=\mu(S(p),0)+m(p)-2$. In this case, $S(p)$ is the polar curve of $(X,0)$, $\Delta(p)$ is the projection of the polar curve, $m(p)$ is the multiplicity of $(X,0)$ and, therefore, $c(p)$ and $d(p)$ are invariants which only on $(X,0)$.

In the following examples, we compute the invariants $c(X,0)$ and $d(X,0)$ with the aid of Singular. In order to compute the discriminant, we use the algorithm for Fitting ideals developed by Hernandes, Pe\~nafort and Miranda in \cite{HMP}

\begin{ex}
Let $(X,0)\subset (\C^3,0)$ be the surface $x^3+y^3+z^4=0$ and let $p(x,y,z)=(x+y-z,2x-y-z)$. We have, $\mu(X,0)=12$, $\mu(S(p|_X))=19$, $\mu(\Delta(p|_X))=35$, $c(X,0)=8$ and therefore, $d(X,0)=0$.
\end{ex}

\begin{ex}
Let $(X,0)\subset (\C^4,0)$ be the surface given by $x^2+y^2+z^2+w^2=yz+w^2+x^3=0$ and take the projection $p(x,y,z,w)=(x+y-z+w,2x-y-z)$. In this case we get $\mu(X,0)=7$, $\mu(S(p|_X))=17$, $\mu(\Delta(p|_X))=49$, $c(X,0)=12$ and hence, $d(X,0)=4$.
\end{ex}

We recall now the definition of Zariski equisingularity \cite{Z}, although we use the more restrictive version of Speder \cite{S}, which requires that the projection is generic so that it implies the Whitney conditions. For simplicity, we state the definition only in the case of surfaces, although it can be easily adapted inductively for higher dimensions.

Let $(X,0)\subset(\C^n,0)$ be a 2-dimensional ICIS and let $\pi:(\mathcal X,0)\to (\C,0)$ be a good deformation in the sense of Definition \ref{variedadewhitney}, with $(\mathcal X,0)\subset(\C\times\C^n,0)$ and $\pi(t,x)=t$. We also denote $T=\C\times\{0\}$.

\begin{defi} We say that $(\mathcal X,0)$ is \emph{Zariski equisingular} if there exists a generic linear projection  $P:\C\times\C^n\to\C^3$ such that:
\begin{enumerate}
\item $\ker P\cap T=\{0\}$,
\item the discriminant $\Delta(P)$ of the restriction $P:(\mathcal X,0)\to(\C^3,0)$ is 
Whitney equisingular along $P(T)$.
\end{enumerate}
\end{defi}

By condition (1), we can choose linear coordinates in $\C^3$ such that $P$ is written in the form $P(t,x)=(t,p_t(x))$, so that for each $t\in\C$ we have a generic linear projection $p_t:(X_t,0)\to (\C^2,0)$. Thus, the projection $\pi:\Delta(P)\to(\C,0)$ given by $\pi(t,z)=t$ defines a deformation of plane curves, whose fibres are precisely the discriminants $\Delta(p_t)$.
As we said before, Speder showed in \cite{S} that Zariski equisingularity implies Whitney equisingularity. We show in next theorem the relation between these two concepts for 2-dimensional ICIS.

\begin{thm}
Let $(\mathcal X,0)$ be a deformation of an ICIS of dimension $2$. The following statements are equivalent:
\begin{enumerate}
  \item $(\mathcal X,0)$ is Zariski equisingular;
  \item $(\mathcal X,0)$ is Whitney equisingular and $c(X_t,0)$ and $d(X_t,0)$ are constant.
\end{enumerate}
\end{thm}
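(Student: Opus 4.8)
The plan is to reduce the statement to Proposition \ref{54} together with the classical fact that, for a family of reduced plane curves, Whitney equisingularity along the parameter axis is equivalent to the constancy of the Milnor number of the fibres.

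First I would fix the genericity setup. By condition (1) in the definition of Zariski equisingularity, a generic projection $P$ is written $P(t,x)=(t,p_t(x))$, and the fibres of the discriminant $\Delta(P)$ are exactly the plane curves $\Delta(p_t)$. The crucial technical point, which I expect to be the main obstacle, is that a projection which is generic for the total space $(\mathcal X,0)$ restricts, for each small $t$, to a projection $p_t$ that is generic for $X_t$ in Teissier's sense \cite{T}. This compatibility is what guarantees that $\mu(\Delta(p_t))$ is the \emph{intrinsic} discriminant Milnor number of a generic projection of $X_t$, so that, via Theorem \ref{1}, the invariants $c(p_t)$ and $d(p_t)$ coincide with the numbers $c(X_t,0)$ and $d(X_t,0)$. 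In particular the unfolding $F(t,x)=(t,p_t(x))$ of the $\mathscr A$-finite germ $p_0=p|_X$ has discriminant fibres $\Delta_t=\Delta(p_t)$, so that the invariants $c_t,d_t,\mu(\Delta_t)$ appearing in Proposition \ref{54} are precisely $c(X_t,0),d(X_t,0),\mu(\Delta(p_t))$.

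For the implication $(1)\Rightarrow(2)$, I would argue as follows. By Speder's theorem \cite{S}, Zariski equisingularity implies that $(\mathcal X,0)$ is Whitney equisingular; in particular $(\mathcal X,0)$ is a good and topologically trivial deformation of $(X,0)$. On the other hand, Zariski equisingularity means exactly that the surface $\Delta(P)$ is Whitney equisingular along $P(T)$, hence the family of plane curves $\Delta(p_t)$ is topologically trivial and so $\mu(\Delta(p_t))$ is constant. Applying Proposition \ref{54} to the unfolding $F(t,x)=(t,p_t(x))$ then yields that $c(X_t,0)$ and $d(X_t,0)$ are constant, which is statement (2). For the converse $(2)\Rightarrow(1)$, suppose $(\mathcal X,0)$ is Whitney equisingular and $c(X_t,0),d(X_t,0)$ are constant. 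Whitney equisingularity again makes $(\mathcal X,0)$ good and topologically trivial, so Proposition \ref{54} applies and gives that $\mu(\Delta(p_t))=\mu(\Delta_t)$ is constant. Choosing a generic projection $P$ with $\ker P\cap T=\{0\}$ inducing generic $p_t$ on each fibre, the $\Delta(p_t)$ then form a $\mu$-constant family of reduced plane curves; for plane curves this forces the multiplicity $m_0(\Delta(p_t))$ to be constant as well, and therefore the family $\Delta(P)\to(\C,0)$ is Whitney equisingular along $P(T)$ by \cite[Lemma 2.8]{NT}. This exhibits a generic projection realizing the conditions in the definition, so $(\mathcal X,0)$ is Zariski equisingular.

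I expect the genericity compatibility of the first paragraph to be the delicate ingredient: one must ensure that a single projection generic for $\mathcal X$ induces generic projections on all the fibres $X_t$ simultaneously, so that the intrinsic numbers $c(X_t,0),d(X_t,0)$, and not merely the numbers attached to some particular projection, are the quantities governed by $\mu(\Delta(p_t))$. Once this is secured, the remaining steps are a direct combination of Speder's theorem, Proposition \ref{54}, and the equivalence between $\mu$-constancy and Whitney equisingularity for families of plane curves; note that Theorems \ref{1} and \ref{2} enter only implicitly, through the proof of Proposition \ref{54}.
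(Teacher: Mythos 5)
Your proposal is correct and follows essentially the same route as the paper: Speder's theorem plus Proposition \ref{54} for $(1)\Rightarrow(2)$, and Proposition \ref{54} plus the fact that $\mu$-constant families of plane curves are Whitney equisingular for $(2)\Rightarrow(1)$. The only cosmetic differences are that you invoke \cite[Lemma 2.8]{NT} where the paper cites \cite{Z} for the plane-curve step, and that the fibrewise genericity of $p_t$ you flag as delicate is handled in the paper by the setup paragraph preceding the theorem rather than inside the proof.
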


\begin{proof}
Assume that $(\mathcal X,0)$ is Zariski equisingular, then it is Whitney equisingular by the Speder's results. Furthermore, for a generic linear projection $P(t,x)=(t,p_t(x))$, the family $\Delta(P)$ is Whitney equisingular and thus, $\mu(\Delta(p_t))$ is constant. Hence, $c(X_t,0)$ and $d(X_t,0)$ are constant by Proposition \ref{54}.

Conversely, if $(\mathcal X,0)$ is Whitney equisingular and $c(X_t,0)$ and $d(X_t,0)$ are constant, then, by Proposition \ref{54}, $\mu(\Delta(p_t))$ is constant for a generic linear projection $P(t,x)=(t,p_t(x))$. Since $\Delta(p_t)$ are plane curves, this implies that $\Delta(P)$ is Whitney equisingular (see \cite{Z}). Therefore, $(\mathcal X,0)$ is Zariski equisingular. 
\end{proof}

We remark that the Whitney equisingularity of the family $(\mathcal X,0)$ does not imply the constancy of $c(X_t,0)$. In fact, the following example, due to Brian\c con and Speder in \cite{BS}, shows a Whitney equisingular family with non constant number of cusps.

\begin{ex}
Let $(\mathcal X,0)$ be the surface in $\C^4$ defined by the zero set of $\Phi(x,y,z,t)=x^6+y^6+z^3+tx^4z$. We write $X_t=\phi_t^{-1}(0)$, where $\phi_t(x,y,z)=\Phi(x,y,z,t)$. Computing $c(X_t,0)$ with the formula of Proposition \ref{cusps}, we get $c(X_0,0)=30$ and $c(X_t,0)=24$ for $t\neq0$.
\end{ex}

\begin{remark}
By Theorem \ref{54}, we know that if a family $f_t:(X_t,0)\to(\C^2,0)$ is topologically trivial, then $\mu(\Delta_t)$ is constant. A natural open question is whether the converse is true. 

We do not know the answer in general, but we show that it is true for a family of generic linear projections $p_t:(X_t,0)\to(\C^2,0)$. In fact, if $\mu(\Delta(p_t))$ is constant, then we know that $p_t^{-1}(\Delta_t)$ is Whitney equisingular by \cite[Corollary 2.11]{NT} and therefore, $m_0(p_t^{-1}(\Delta_t))$ is constant. Moreover, since $\Delta_t$ are plane curves, $\Delta_t$ is Whitney equisingular and hence, by the Speder's result, the family $X_t$ is Whitney equisingular. Then, the family $p_t$ is Whitney equisingular by Theorem \ref{we}. 
\end{remark}

\section{The weighted homogeneous case}

In this section, we will show how to compute the invariants of a weighted homogeneous germ on a weighted homogenous ICIS in terms of the weights and degrees. 

We fix positive integer numbers $w_1,\dots,w_n,d$. We say that a function germ $h\in\mathcal O_n$ is weighted homogeneous of  type $(w_1,\dots,w_n;d)$ if
\begin{equation*}
h(t^{w_1}x_1,\dots,t^{w_n}x_n)=t^{d}h(x_1,\dots,x_n),
\end{equation*}
for all $t\in \C$ and $(x_1,\dots,x_n)\in\C^n$. In this case, we call $(w_1,\dots,w_n)$ the weights of $h$ and $d$, the weighted degree of $h$. We say that a map germ $H=(h_1,\dots,h_p):(\C^n,0)\to\C^p$ is weighted homogeneous of type $(w_1,\dots,w_n;d_1,\dots,d_p)$ if each $h_i$ is a weighted homogeneous function germ of type $(w_1,\dots,w_n;d_i)$.
If $(X,0)$ is an ICIS defined as the zeros of a weighted homogeneous map germ of type $(w_1,\dots,w_n;d_1,\dots,d_p)$, then we say that $(X,0)$ is weighted homogeneous of type $(w_1,\dots,w_n;d_1,\dots,d_p)$.

In the following proposition, we assume that $(X,0)\subset(\C^n,0)$ is a weighted homogeneous ICIS of dimension $2$ of type $(w_1,\dots,w_n;d_3,\dots,d_{n})$ and that $f:(X,0)\to(\C^2,0)$ is an $\mathscr A$-finite weighted homogeneous map germ of type $(w_1,\dots,w_n;d_1,d_2)$. 
In order to simplify the notation, we will write
\begin{align*}
&A=\sum_{i=1}^nw_i, \, \, A_2=\sum_{1\leq i<j\leq n}w_iw_j, \, \, B=\prod_{i=1}^nw_i,\\
&C=\sum_{i=1}^nd_i, \, \, C_2=\sum_{3\leq i\leq j\leq n}d_id_j, \,\, D=\prod_{i=1}^n d_i.
\end{align*}

\begin{prop}\label{musw} With the above notation, we have:
\begin{align*}
\mu(S)&=\frac{D(C-A)[2(C-A)-d_{1}-d_2]}{Bd_{1}d_2}+1,\\
c&=\frac{D}{Bd_{1}d_2}\left[2(C-A)^2-C(d_{1}+d_2-A)+d_{1}d_2-A_2-C_2\right].
\end{align*}															
\end{prop}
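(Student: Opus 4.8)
The plan is to compute $\mu(S)$ by recognizing $S$ as a weighted homogeneous ICIS curve and applying the closed formula for the Milnor number of a quasihomogeneous complete intersection (Greuel, Greuel--Hamm), and then to obtain $c$ essentially for free from the already-established Theorem \ref{2}, once $\mu(X,0)$ and the degree $m$ are expressed in terms of the weights and degrees.

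First I would pin down the graded data of $S$. By definition $S=v(\phi_3,\dots,\phi_n,\delta)$ with $\delta=\det J(f,\phi)$, so $S$ is cut out by $n-1$ equations in $(\C^n,0)$; since $f$ is $\mathscr A$-finite, $S$ is a reduced curve with isolated singularity at $0$, hence a one-dimensional ICIS, and it is a complete intersection because its codimension equals the number of equations. The degrees $d_3,\dots,d_n$ of the $\phi_i$ are given; the key point is that $\delta$ is again weighted homogeneous (here one uses that $f$ and $\phi$ share the weights $w_1,\dots,w_n$). Each entry of the $n\times n$ matrix $J(f,\phi)$ sitting in the row of a component of degree $e$ and the column $\partial/\partial x_j$ has weighted degree $e-w_j$, so every term of the determinant has degree $\sum_{i=1}^n d_i-\sum_{j=1}^n w_j=C-A$; thus $\deg\delta=C-A$. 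This degree computation is the one genuinely new input.

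Next I would invoke the known formula for the Milnor number of a quasihomogeneous ICIS curve: if a reduced complete-intersection curve in $(\C^n,0)$ is defined by weighted homogeneous equations of degrees $e_1,\dots,e_{n-1}$ with weights $w_1,\dots,w_n$, then
\[
\mu=\frac{e_1\cdots e_{n-1}}{w_1\cdots w_n}\Bigl(\sum_{i=1}^{n-1}e_i-\sum_{j=1}^n w_j\Bigr)+1 .
\]
Applying this with $(e_1,\dots,e_{n-1})=(d_3,\dots,d_n,C-A)$ gives $e_1\cdots e_{n-1}=(C-A)\,D/(d_1d_2)$, while $\sum e_i-\sum w_j=(d_3+\dots+d_n)+(C-A)-A=2(C-A)-d_1-d_2$; substituting yields exactly the stated expression for $\mu(S)$, the $+1$ being the constant term of the formula. (For $n=2$, where there are no $\phi_i$ and $\delta=\det J(f)$ has degree $C-A$, this specializes to the plane-curve Milnor number and is a useful consistency check.)

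Finally, for $c$ I would bypass the determinantal colength of Proposition \ref{cusps} and read $c$ off Theorem \ref{2}, which gives $c=\mu(S)+m-2-\mu(X,0)$. Here $m$, the local degree of the finite germ $f$, equals $\dim_\C \mathcal O_n/(f_1,f_2,\phi_3,\dots,\phi_n)=D/B$ by weighted B\'ezout for the zero-dimensional complete intersection $(f_1,f_2,\phi)$; and $\mu(X,0)$, the Milnor number of the weighted homogeneous ICIS surface $v(\phi_3,\dots,\phi_n)$, is given by the Greuel--Hamm formula in dimension two in terms of the $w_j$ and $d_3,\dots,d_n$ (it reduces to $(d_3-w_1)(d_3-w_2)(d_3-w_3)/B$ in the hypersurface case $n=3$, which checks the normalization). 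Substituting the three expressions into $c=\mu(S)+m-2-\mu(X,0)$ and collecting terms, the weight symmetric functions assemble into $A_2$ and the degree products into $C_2$, leaving the combination $A_2+C_2-A(d_3+\dots+d_n)$, which produces the claimed formula for $c$. I expect the main obstacle to be bookkeeping rather than conceptual: verifying that the ICIS hypotheses genuinely hold for $S$ with its Fitting/Jacobian scheme structure, and carrying out the degree-two simplification so that the residual symmetric combinations match the invariants $A_2,C_2,C$ exactly; the smooth case $X=\C^2$, where $\mu(X,0)=0$, is the decisive consistency check that the assembly via Theorem \ref{2} is correct.
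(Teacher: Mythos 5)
Your proposal is correct and follows essentially the same route as the paper's proof: identify $S$ as a weighted homogeneous ICIS curve of type $(w_1,\dots,w_n;d_3,\dots,d_n,C-A)$, apply the closed Milnor number formula for quasihomogeneous ICIS curves (the paper cites its companion preprint where you cite Greuel--Hamm, but the formula used is identical), compute $m=D/B$ by weighted B\'ezout and $\mu(X,0)$ by Greuel's formula, and assemble $c$ from Theorem \ref{2} via $c=\mu(S)+m-2-\mu(X,0)$. The only difference is presentational: you make explicit the degree computation $\deg\delta=C-A$ and the consistency checks, which the paper leaves implicit.
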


\begin{proof}
If $X=\phi^{-1}(0)$ with $\phi=(\phi_1,\dots,\phi_{n-2})$ and $f=(f_1,f_2)$, then $S$ is the curve
$$v(\phi_1,\dots,\phi_{n-2},\det J(\phi_1,\dots,\phi_{n-2},f_1,f_2)).$$
Therefore, $S$ is a weighted homogeneous ICIS of type 
$$(w_1,\dots,w_n;d_3,\dots,d_{n},C-A).$$
The result for $\mu(S)$ follows now from \cite[Corollary 4.2]{NOT}.

The multiplicity $m$ can be computed easily by using Bezout's theorem:
\begin{equation*}
m=\dim_{\C}\frac{\mathcal O_n}{\left\langle \phi_1,\dots,\phi_{n-2},f_1,f_2\right\rangle}=\frac{D}{B}.
\end{equation*}

Finally, from \cite[theorem 7.9]{GR}, we have that
\begin{equation*}
\mu(X,0)=-1+\frac{D}{d_{1}d_2B}(A_2+C_2-A(C-d_{1}-d_2)).
\end{equation*}
The result for $c$ follows by using Theorem \ref{2} and combining the formulas for $\mu(S)$, $m$ and $\mu(X,0)$.
\end{proof}

%\begin{prop}\label{mw} Let $(X,0)\subset(\C^n,0)$ be an ICIS of dimension $2$ defined by a weighted homogeneous map germ of  type $(w_1,\dots,w_n;d_3,\dots,d_{n})$ and let $f:(X,0)\to(\C^2,0)$ be an $\mathscr A$-finite weighted homogeneous map of type $(w_1,\dots,w_n;d_{1},d_2)$. Then,
%\begin{equation*}
%m=\frac{D}{B}.
%\end{equation*}
%\end{prop}
%
%\begin{proof}
%We have that
%\begin{equation*}
%m=\dim_{\C}\frac{\mathcal O_n}{\left\langle \phi_1,\dots,\phi_{n-2},f_1,f_2\right\rangle}.
%\end{equation*}
%Therefore, the result is a consequence of Bezout's theorem.
%
%Finally, by \cite[theorem 7.9]{GR}, we have that
%\begin{equation*}
%\mu(X,0)=-1+\frac{D}{d_{1}d_2B}(A_2+C_2-A(C-d_{1}-d_2)).
%\end{equation*}
%The result follows if we combine this formula with the propositions \ref{musw}, \ref{mw} and the theorem \ref{2}. 
%
%\end{proof} 
%
%\begin{cor} Let $(X,0)\subset(\C^n,0)$ be an ICIS with dimension $2$ defined by a weighted homogeneous map germ of the type $(w_1,\dots,w_n;d_3,\dots,d_{n})$ and let $f:(X,0)\to(\C^2,0)$ be an $\mathcal A$-finite weighted homogeneous map of type $(w_1,\dots,w_n;d_{1},d_2)$. Then,
%\begin{equation*}
%c=\frac{D}{Bd_{1}d_2}\left[2(C-A)^2-C(d_{1}+d_2-A)+d_{1}d_2-A_2-C_2\right].
%\end{equation*}
%\end{cor}
%
%\begin{proof}
%By the \cite[theorem 7.9]{GR}, we have that
%\begin{equation*}
%\mu(X,0)=-1+\frac{D}{d_{1}d_2B}(A_2+C_2-A(C-d_{1}-d_2)).
%\end{equation*}
%
%The result follows if we combine this formula with the propositions \ref{musw}, \ref{mw} and the theorem \ref{2}. 
%\end{proof}

We will need the following lemma in order to write $\mu(\Delta)$ and $d$ in terms of the weights and degrees.

\begin{lem}
Let $S\subset (\C^n,0)$ be a weighted homogenous ICIS of dimension $1$ of type $(w_1,\dots,w_n;c_1,\dots,c_{n-1})$ and let $f=(f_1,f_2):(\C^n,0)\to\C^2$ be a weighted homogeneous map germ of  type $(w_1,\dots,w_n;c_n,c_{n+1})$ such that $f|_S:S\to\C^2$ is $s$-to-one. Then $\Delta:=f(S)$ is weighted homogeneous of type $(c_n,c_{n+1};\frac{c_1\dots c_{n+1}}{sw_1\dots w_n})$.
\end{lem}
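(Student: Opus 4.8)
The plan is to first read off the weights of $\Delta$ from the natural $\C^\ast$-equivariance of $f$, and then to pin down the single unknown weighted degree by a Bezout-type intersection count, transferred from the source to the target through the $s$-to-one map $f|_S$.

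First I would put the $\C^\ast$-action $\lambda\cdot(x_1,\dots,x_n)=(\lambda^{w_1}x_1,\dots,\lambda^{w_n}x_n)$ on the source and $\lambda\cdot(u,v)=(\lambda^{c_n}u,\lambda^{c_{n+1}}v)$ on the target $\C^2$. Since $f_1,f_2$ are weighted homogeneous of degrees $c_n,c_{n+1}$, the map $f$ is equivariant for these two actions; as $S$ is weighted homogeneous it is invariant under the source action, so $\Delta=f(S)$ is invariant under the target action. Hence $\Delta$ is weighted homogeneous in the coordinates $(u,v)$ with weights $(c_n,c_{n+1})$. Being a reduced plane curve germ, $\Delta$ is a hypersurface $\Delta=v(g)$ with $g$ weighted homogeneous of some degree $\delta$ in the weights $(c_n,c_{n+1})$, and everything reduces to showing $\delta=\tfrac{c_1\cdots c_{n+1}}{s\,w_1\cdots w_n}$.

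To compute $\delta$ I would compare intersection numbers using a generic auxiliary form. Choose a weighted homogeneous $h:(\C^2,0)\to(\C,0)$ of some degree $e$ whose zero set contains no branch of $\Delta$; then $g,h$ form a regular sequence and the weighted Bezout theorem gives
\[
\dim_\C\frac{\mathscr O_2}{(g,h)}=\frac{\delta\,e}{c_n c_{n+1}}.
\]
Writing $S=v(\phi_1,\dots,\phi_{n-1})$ with $\deg\phi_i=c_i$, the pullback $h\circ f$ has degree $e$, and since $h$ meets no branch of $\Delta$ the germ $v(\phi_1,\dots,\phi_{n-1},h\circ f)$ is just the origin, so $\phi_1,\dots,\phi_{n-1},h\circ f$ is again a regular sequence and weighted Bezout yields
\[
\dim_\C\frac{\mathscr O_n}{(\phi_1,\dots,\phi_{n-1},h\circ f)}=\frac{c_1\cdots c_{n-1}\,e}{w_1\cdots w_n}.
\]
The bridge between the two counts is the projection formula for $f|_S$: the last quotient is $\mathscr O_S/(h\circ f)\mathscr O_S$, and $h\circ f$ is the image under $f^\ast$ of the non-zero-divisor $h|_\Delta\in\mathscr O_\Delta$. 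As $f|_S:S\to\Delta$ is a finite map of reduced, Cohen--Macaulay curves of generic degree $s$, the associativity (projection) formula for Samuel multiplicities gives $\dim_\C\mathscr O_S/(h\circ f)=s\cdot\dim_\C\mathscr O_\Delta/(h|_\Delta)=s\cdot\dim_\C\mathscr O_2/(g,h)$. Equating the two displays gives $\tfrac{c_1\cdots c_{n-1}\,e}{w_1\cdots w_n}=s\cdot\tfrac{\delta\,e}{c_n c_{n+1}}$, and solving for $\delta$ produces the asserted value.

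The step I expect to demand the most care is the factor $s$ in the projection formula: one must ensure that ``$s$-to-one'' really forces $\mathscr O_S$ to have generic rank $s$ over $\mathscr O_\Delta$ \emph{along every branch} of $\Delta$, so that the multiplicities scale uniformly by $s$ (for reducible $\Delta$ this is exactly the content of applying the associativity formula branch by branch), and that the reduced structure on $\Delta=f(S)$ is the correct one, since the $0$th Fitting structure would instead carry the degree $s\delta$. A clean alternative that avoids choosing $h$ is to compare the leading coefficients $\lim_{t\to1}(1-t)P(t)$ of the Hilbert series, using $P_S(t)=\prod_{i=1}^{n-1}(1-t^{c_i})/\prod_{i=1}^{n}(1-t^{w_i})$ for the complete intersection $S$ and $P_\Delta(t)=(1-t^\delta)/\bigl((1-t^{c_n})(1-t^{c_{n+1}})\bigr)$ for $\Delta$; the same rank-$s$ scaling then yields $\tfrac{c_1\cdots c_{n-1}}{w_1\cdots w_n}=s\cdot\tfrac{\delta}{c_n c_{n+1}}$ directly. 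In either route one also checks that a generic $h$ avoids all branches of $\Delta$, which is what legitimizes both regular-sequence (Bezout) hypotheses.
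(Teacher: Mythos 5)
Your proof is correct, and it takes a genuinely different route from the paper's. The paper argues parametrically: it first reduces the general weighted case to the homogeneous case $w_1=\cdots=w_n=1$ via the substitution $\alpha(x_1,\dots,x_n)=(x_1^{w_1},\dots,x_n^{w_n})$, which multiplies the covering degree by $w_1\cdots w_n$; in the homogeneous case every branch of $S$ is a line $t\mapsto(a_1t,\dots,a_nt)$, so its image is parametrized by $t\mapsto(t^{c_n}f_1(a),t^{c_{n+1}}f_2(a))$ and is assigned weighted degree $c_nc_{n+1}$, the number of branches of $S$ is $c_1\cdots c_{n-1}$ by Bezout, and the $s$-to-one hypothesis converts this into the number of branches of $\Delta$, whence the degree. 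You instead stay in the weighted setting throughout: $\C^*$-equivariance identifies the weights of $\Delta$, weighted Bezout computes the two colengths $\dim_\C\mathscr O_2/(g,h)$ and $\dim_\C\mathscr O_n/(\phi_1,\dots,\phi_{n-1},h\circ f)$, and the associativity (projection) formula for the finite degree-$s$ map $f|_S$ bridges them. What each buys: the paper's argument is more elementary and exhibits the branch structure of $\Delta$ explicitly, but its intermediate claims (each branch of $\Delta$ has degree exactly $c_nc_{n+1}$, and $S$ has exactly $s$ times as many branches as $\Delta$) fail individually when $\gcd(c_n,c_{n+1})>1$ or when a branch maps onto a coordinate axis, since a line then covers its image with degree $>1$; only the product of the two quantities is invariant, and the errors cancel in the final count. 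Your bookkeeping via local covering degrees $k_i$ with $\sum_{i\mapsto j}k_i=s$ over every branch $\Delta_j$ absorbs exactly these degeneracies uniformly, at the cost of heavier commutative algebra (Cohen--Macaulayness to identify colengths with Samuel multiplicities, plus the associativity formula). The subtleties you flag --- that ``$s$-to-one'' must hold generically over every branch of $\Delta$ (it does, since a dense open subset of $\Delta$ meets every branch densely), that a generic weighted form $h$ avoiding all branches exists (e.g.\ $u^{c_{n+1}}+\lambda v^{c_n}$ for all but finitely many $\lambda$), and that the reduced rather than Fitting structure on $\Delta$ is the relevant one --- are indeed the right points to check, and your checks go through.
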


\begin{proof}

{ \it Case 1:} The homogeneous case. 

We assume that $w_1=\dots=w_n=1$. Let $S_i$ be a branch of $S$. Since $S$ is homogenous, the map
\begin{equation*}
\gamma_i(t)=(a_1t,\dots,a_nt)
\end{equation*}
is a parametrization of $S_i$, where $0\neq(a_1,\dots,a_n)\in S_i$.
We have
\begin{equation*}
f(\gamma(t))=(t^{c_n}f(a_1,\dots,a_n), t^{c_{n+1}}f(a_1,\dots,a_n)).
\end{equation*}
Then, each branch of $\Delta$ is weighted homogeneous of type $(c_n,c_{n+1};c_nc_{n+1})$ and $\Delta$ is weighted homogeneous of type $(c_n,c_{n+1};rc_nc_{n+1})$, where $r$ is the number of branches of $\Delta$.

Since $f|_S$ is $s$-to-one, the number of branches of $S$ is equal to $sr$. On the other hand, since $S$ is homogeneous with degrees $c_1,\dots,c_{n-1}$, by Bezout's theorem, the number of branches of $S$ is $c_1\dots c_{n-1}$. Thus, we deduce that $r=\frac{c_1\dots c_{n-1}}{s}$.

{\it Case 2:} The general case. 

Assume that $S=\phi^{-1}(0)$, where $\phi$ is weighted homogeneous of type $(w_1,\dots,w_n;c_1,\dots,c_{n-1})$.
Let $\alpha:\C^n\to\C^n$ be the map defined by 
$$\alpha(x_1,\dots,x_n)=(x_1^{w_1},\dots,x_n^{w_n})$$ 
and let $\tilde{S}$ be the curve defined by $\tilde S=(\phi\circ\alpha)^{-1}(0)$. 

We have that $\phi\circ\alpha$ is homogeneous with degrees $c_1,\dots,c_{n-1}$ and that $(f\circ\alpha)|_{\tilde{S}}$ is homogeneous with degrees $c_n,c_{n+1}$ and is $(sw_1\dots w_n)$-to-one. By applying the Case 1 to $(f\circ\alpha)|_{\tilde{S}}$, we conclude that $\Delta=f(S)=f\circ\alpha(\tilde{S})$ is weighted homogeneous of type $(c_n,c_{n+1};\frac{c_1\dots c_{n+1}}{sw_1\dots w_n})$.
\end{proof}

Now, we can write $\mu(\Delta)$ , $d$ and $\mu_\Delta$ as a function of the weights and degrees.

\begin{prop}
With the same notation as in Proposition \ref{musw}, we have:
\begin{equation*}
\mu(\Delta)=\frac{\left[D(C-A)-d_{1}B\right]\left[D(C-A)-d_2B\right]}{d_{1}d_2B}.
\end{equation*}															
\end{prop}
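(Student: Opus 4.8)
The plan is to reduce the statement to two inputs that are already available: the preceding lemma, which pins down the weighted homogeneous type of the discriminant $\Delta$, and the classical Milnor formula for the Milnor number of a weighted homogeneous plane curve singularity, namely $\mu=(e/a-1)(e/b-1)$ for a curve of type $(a,b;e)$.

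First I would record the type of the polar curve $S$. As in the proof of Proposition~\ref{musw}, $S=v(\phi_1,\dots,\phi_{n-2},\det J(\phi_1,\dots,\phi_{n-2},f_1,f_2))$ is a $1$-dimensional weighted homogeneous ICIS of type $(w_1,\dots,w_n;d_3,\dots,d_n,C-A)$, the last degree being the weighted degree $C-A$ of the Jacobian determinant (each product in the determinant has weighted degree $\sum_i d_i-\sum_j w_j=C-A$, the $d_i$ running over the degrees of $\phi_1,\dots,\phi_{n-2},f_1,f_2$). Since $f$ is $\mathscr A$-finite, the restriction $f|_S\colon S\to\Delta$ is one-to-one by the equivalences established in the first lemma of this section, so it is $s$-to-one with $s=1$. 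I can then apply the preceding lemma with $(c_1,\dots,c_{n-1})=(d_3,\dots,d_n,C-A)$ and $(c_n,c_{n+1})=(d_1,d_2)$, which gives that $\Delta$ is weighted homogeneous of type $(d_1,d_2;e)$ with
$$e=\frac{(d_3\cdots d_n)(C-A)\,d_1 d_2}{w_1\cdots w_n}=\frac{D(C-A)}{B},$$
after using $d_3\cdots d_n=D/(d_1 d_2)$ and $w_1\cdots w_n=B$.

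Next, since $\mathscr A$-finiteness forces $\Delta$ to be a reduced plane curve whose only singular point is the origin, the Milnor formula applies and yields
$$\mu(\Delta)=\left(\frac{e}{d_1}-1\right)\left(\frac{e}{d_2}-1\right)=\frac{(e-d_1)(e-d_2)}{d_1 d_2}.$$
Writing $e-d_i=(D(C-A)-d_i B)/B$ and substituting $e=D(C-A)/B$ then turns this into a closed form in terms of $A,B,C,D,d_1,d_2$; this final step is pure algebra.

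The genuine content sits in verifying the hypotheses of the two black boxes rather than in the computation. The points needing the most care are the bookkeeping for the weighted degree $C-A$ of the Jacobian determinant and the reduction $s=1$, since both feed directly into the value of $e$ and hence into every coefficient of the final formula. I would sanity-check the whole chain on a small explicit model — for instance a weighted homogeneous cusp $f(x,y)=(x,xy+y^3)$ on $(\C^2,0)$ of type $(2,1;2,3)$, where $\Delta$ is the standard $A_2$ cusp with $\mu(\Delta)=2$ — to make sure that the degrees and, in particular, the power of $B$ appearing in the denominator come out correctly.
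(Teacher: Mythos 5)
Your route is exactly the paper's: read off the weighted homogeneous type $(w_1,\dots,w_n;d_3,\dots,d_n,C-A)$ of $S$ from the proof of Proposition \ref{musw}, apply the preceding lemma with $s=1$ (correctly justified by the one-to-one property of $f|_S$ for an $\mathscr A$-finite germ), obtaining that $\Delta$ is weighted homogeneous of type $(d_1,d_2;e)$ with $e=D(C-A)/B$, and finish with the Milnor--Orlik formula. All of that bookkeeping is correct, including the weighted degree $C-A$ of the Jacobian determinant and the reduction of $d_3\cdots d_n\,d_1d_2$ to $D$.

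The problem is the step you dismissed as ``pure algebra'' and never carried out: it does not produce the displayed formula. From $e=D(C-A)/B$ one gets $e-d_i=\bigl(D(C-A)-d_iB\bigr)/B$, hence
\begin{equation*}
\mu(\Delta)=\frac{(e-d_1)(e-d_2)}{d_1d_2}
=\frac{\bigl[D(C-A)-d_1B\bigr]\bigl[D(C-A)-d_2B\bigr]}{d_1d_2B^{2}},
\end{equation*}
with $B^{2}$, not $B$, in the denominator, since each factor $e-d_i$ contributes one power of $B$. So your argument proves a formula differing from the proposition as printed by a factor of $B$ (the two agree only when $B=1$). Your own proposed sanity check settles which one is correct: for $f(x,y)=(x,xy+y^3)$ of type $(2,1;2,3)$ one has $D=6$, $C-A=2$, $B=2$, so the printed formula gives $\frac{(12-4)(12-6)}{2\cdot 3\cdot 2}=4$, whereas $\mu(\Delta)=2$ (an $A_2$ cusp), which is what the $B^{2}$ version yields and what Theorem \ref{1} forces ($c=1$, $d=0$, $\mu(S)=0$). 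In other words, the method is sound and is precisely the paper's method (the paper is equally terse at this point, simply invoking Milnor--Orlik), but the statement as printed carries a typo in its denominator, and your write-up, by asserting that the algebra yields that statement, ends with a false equality. Had you executed the check you proposed instead of deferring it, you would have caught this discrepancy and flagged the statement rather than endorsed it.
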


\begin{proof}
In fact, we know from the proof of Proposition \ref{musw} that $S$ is weighted homogeneous of type $(w_1,\dots,w_n:d_3,\dots,d_{n},C-A)$. Therefore, by the previous lemma $\Delta=f(S)$ is weighted homogeneous of type $(d_{1},d_2;\frac{D(C-A)}{B})$. Therefore, the result follows from Milnor-Orlik formula.
\end{proof}

We recall that, by Theorem \ref{1},
\begin{equation*}
d=\frac{\mu(\Delta)}{2}-\frac{\mu(S)}{2}-c,
\end{equation*}
and, by Proposition \ref{mudelta},
\begin{equation*}
\mu_\Delta(f)=d+\mu(S).
\end{equation*}
So, it is not hard to compute $d$ and $\mu_\Delta(f)$ in terms of the weights and degrees.


\begin{thebibliography}{99}

\bibitem{BG}
  R. O. Buchweitz, G. M. Greuel,\
  \emph{The Milnor number and deformations of complex curve singularities}. Invent. Math. {\bf 58} (1980), No. 3, 241--248.

\bibitem{BS}
J. Brian\c con, J. P. Speder,
{\emph Familles \'equisinguli\`eres de surfaces \`a singularit\'e isol\'ee}. C. R. Acad. Sci. Paris S\'er. A-B {\bf 280} (1975), Aii, A1013--A1016.

\bibitem{EH}
J.A.Eagon, M.Hochster,
\emph{Cohen-Macaulay rings,invariant theory,and the generic perfection of determinantal loci}, Amer. J. Math. {\bf 93} (1971), 1020--1058.


\bibitem {G} T. Gaffney, 
\emph{Polar multiplicities and equisingularity of map germs.} Topology {\bf 32} (1993), no. 1, 185--223.

\bibitem{GM} T. Gaffney,  D. M. Q. Mond,
\emph{Cusps and double folds of germs of analytic maps $\C^2\to \C^2$.} J. London Math. Soc. (2) {\bf 43} (1991), no. 1, 185--192.

\bibitem{GM2} T. Gaffney, D. M. Q. Mond, \emph{Weighted homogeneous maps from the plane to the plane.} Math. Proc. Cambridge Philos. Soc. {\bf 109} (1991), no. 3, 451--470.

\bibitem{GW} C. G. Gibson, K. Wirthmuller, A. A. du Plessis, E. J. N. Looijenga,
\emph{Topological stability of smooth mappings.} Lecture Notes in Mathematics, Vol. 552. Springer-Verlag, Berlin-New York, 1976. 

\bibitem{Go} V. Goryunov, \emph{Singularities of projections of complete intersections}. J. Soviet. Math. {\bf 27} (1984), 2785--2811.

\bibitem{GR} G. M. Greuel,\
   \emph{Dualitat in der lokalen kohomologie isolierter singularitaten}. Math Ann. {\bf 250} (1980), 157--173.
   
\bibitem{HM} J. P. Henry, M. Merle, \emph{Fronces et doubles plis.} Compositio Math. {\bf 101} (1996), no. 1, 21--54.
   
\bibitem{HMP} M. E. Hernandes, J. A. Miranda, G. Peñafort-Sanchis, \emph{A Presentation Matrix Algorithm for ${\mathcal O}_{X,x}$}. Preprint.


\bibitem{MNP}
W. L. Marar,  J. J. Nuño-Ballesteros, G. Peñafort-Sanchis, \emph{Double point curves for corank 2 map germs from $\C^2$ to $\C^3$}. Topology Appl. 159 (2012), no. 2, 526--536.

\bibitem{MM}
D. Mond, J. Montaldi,
\emph{Deformations of maps on complete intersections, Damon's $\mathscr K_V$-equivalence and bifurcations}. Singularities (Lille, 1991), 263--284, 
London Math. Soc. Lecture Note Ser., 201, Cambridge Univ. Press, Cambridge, 1994. 

\bibitem{MP}
D. Mond, R. Pellikaan, 
\emph{Fitting ideals and multiple points of analytic mappings.} Algebraic geometry and complex analysis (P\'atzcuaro, 1987), 107--161, 
Lecture Notes in Math., 1414, Springer, Berlin, 1989. 

\bibitem{NOT} J. J. Nuño-Ballesteros, B. Oréfice-Okamoto and J. N. Tomazella,
\emph{Positive degree deformations of weighted homogeneous singularities}. Preprint.

\bibitem{NOT2} J. J. Nuño-Ballesteros, B. Oréfice-Okamoto and J. N. Tomazella,
\emph{Equisingularity of families of isolated determinantal singularities}. Preprint. arXiv:1405.3403 [math.AG]

\bibitem{NT} J. J. Nuño-Ballesteros, J. N. Tomazella,
\emph{Equisingularity of families of map germs between curves}. Math. Z. {\bf 272} (2012), no. 1-2, 349--360.

\bibitem{Rieger} J. H. Rieger, \emph{Families of maps from the plane to the plane. } J. London Math. Soc. (2) {\bf 36} (1987), no. 2, 351--369.

\bibitem{S}
J. P. Speder, \emph{Équisingularité et conditions de Whitney}.  Amer. J. Math. {\bf 97} (1975), no. 3, 571--588. 

\bibitem{T}  B. Teissier,
\emph{The hunting of invariants in the geometry of discriminants. Real and complex singularities}, (Proc. Ninth Nordic Summer School/NAVF Sympos. Math., Oslo, 1976), pp. 565--678. Sijthoff and Noordhoff, Alphen aan den Rijn, 1977.

\bibitem{Z} O. Zariski, \emph{Some open questions in the theory of singularities.} Bull. Amer. Math. Soc. {\bf 77} (1971), 481--491.

\bibitem{wall} C. T. C. Wall, \emph{Finite determinacy of smooth
map-germs,} Bull. London Math. Soc. {\bf 13} (1981), no. 6, 481--539.

\bibitem{W} H. Whitney,
\emph{On singularities of mappings of euclidean spaces. I. Mappings of the plane into the plane.} Ann. of Math. (2) {\bf 62} (1955), 374--410.

\end{thebibliography}
\end{document}